\tikzset{join/.code=\tikzset{after node path={%
\ifx\tikzchainprevious\pgfutil@empty\else(\tikzchainprevious)%
edge[every join]#1(\tikzchaincurrent)\fi}}}
\tikzset{>=stealth',every on chain/.append style={join},
         every join/.style={->}}
\newtheorem{thm}{Theorem}[section]
\newtheorem*{mthm}{Main Theorem}
\newtheorem{cor}[thm]{Corollary}
\newtheorem{lem}[thm]{Lemma}
\newtheorem{conj}[thm]{Conjecture}
\newtheorem{prop}[thm]{Proposition}
\theoremstyle{definition}
\newtheorem{rem}[thm]{Remark}
\newtheorem{prob}[thm]{Question}
\newtheorem{defin}[thm]{Definition}
\theoremstyle{definition}
\newcommand{\no}{\noindent}
\newcommand{\bex}{\begin{example}\em}
	\newcommand{\eex}{\end{example}}
\newcommand{\link}{\operatorname{link}}
\newcommand{\Link}{\operatorname{Link}}
\def\co{\colon\!}
\newcommand{\map}{\operatorname{map}}
\newcommand{\BLink}{\operatorname{BrLink}}
\newcommand{\slink}{\operatorname{slink}}
\newcommand{\holim}{\operatorname{holim}}
\def\R{\mathbb{R}}
\def\T{\mathbb{T}}
\def\C{\mathcal{C}}
\def\P{\mathcal{P}}
\def\LL{\mathcal{L}}
\def\x{\times}
\def\Cnf{\text{\rm Conf}}
\def\map{\mathrm{map}}
\def\id{\mathrm{id}}
\numberwithin{equation}{section} 
\numberwithin{thm}{section} 
\title{Homotopy string links and the $\kappa$-invariant}
\author{F. R. Cohen}
\address{
University of Rochester,
Rochester, New York 14627 } 
\email{cohf@math.rochester.edu}
\author{R. Komendarczyk}\thanks{The second author acknowledges the support of DARPA grant YFA N66001-11-1-4132 and NSF grant DMS 1043009.}
\address{
Tulane University,
New Orleans, Louisiana 70118 } 
\email{rako@tulane.edu}
\author{R. Koytcheff}\thanks{The third author was supported by a PIMS postdoctoral fellowship and by NSERC}
\address{
University of Massachusetts,
Amherst, Massachusetts 01003
} 
\email{koytcheff@math.umass.edu}
\author{C. Shonkwiler}
\address{
Colorado State University,
Fort Collins, Colorado 80521 } 
\email{clayton@math.colostate.edu}
\begin{document}

\begin{abstract}
	Koschorke introduced a map from the space of closed $n$-component links to the space of maps from the $n$-torus to the ordered configuration space of $n$-tuples of points in $\R^3$.  He conjectured that this map separates homotopy links. The purpose of this paper is to construct an analogous map for string links, and to prove (1) this map in fact separates homotopy string links, and (2) Koschorke's original map factors through the map constructed here together with an analogue of Markov's closure map defined on the level of certain function spaces.
\end{abstract}

\maketitle

\vspace{-1cm} 

\section{Introduction}

Recall from~\cite{Koschorke:1991a,Koschorke:1997} that Koschorke's $\kappa$-invariant assigns to each link-homotopy class of $n$--component links an associated homotopy class of maps from a torus to a configuration space. The precise definition will be given below, but the basic appeal of Koschorke's construction is threefold. 

First, one of Koschorke's primary motivations was to define an invariant which extends Milnor's $\overline{\mu}$-invariants~\cite{Milnor:1954,Milnor:1957} to higher-dimensional links, and indeed he and others (e.g.,~\cite{Koschorke:2004,Munson:2011}) have given very precise descriptions of the higher-dimensional situation.

Second, despite its rather abstract appearance, the $\kappa$-invariant gives a natural way of defining numerical link-homotopy invariants.
Since link-homotopy invariants are otherwise relatively hard to come by, this is an important potential source for numerical link-homotopy invariants which may be applied to, for example, problems in plasma physics (cf.~\cite{Komendarczyk:2009,Komendarczyk:2010,DeTurck-Gluck-Komendarczyk-Melvin:2013,DeTurck-Gluck-Komendarczyk-Melvin:2013a}).
Finally, it remains an open question first posed by Koschorke whether the $\kappa$-invariant is a complete invariant of homotopy links (i.e.~links up to link-homotopy).  It is known to separate Borromean links~\cite{Koschorke:1997,Cohen-Komendarczyk-Shonkwiler:2015} as well as all 2- and 3-component links~\cite{DeTurck-Gluck-Komendarczyk-Melvin:2013}, but the situation remains unclear for links with 4 or more components.

If the $\kappa$-invariant really does separate all homotopy links, then it provides an alternative classification of homotopy links to that of Habegger and Lin~\cite{Habegger-Lin:1990}.  Their classification was based on a Markov-type theorem for closures of homotopy string links, which suggests a strategy for answering Koschorke's question: first, define an analog $\check{\kappa}$ of $\kappa$ for homotopy string links and show that it separates them, and then show that the ``closure'' map $\check{\kappa} \mapsto \kappa$ is compatible with Habegger and Lin's classification.

In this paper we carry out the first part of this plan, showing that $\check{\kappa}$ separates homotopy string links. Although it may not be explicitly apparent in what follows, we drew significant inspiration from the close connection between the $\kappa$-invariant and Milnor's $\overline{\mu}$-invariants, based on Habegger and Lin's observation that Milnor's invariants are
integer invariants that separate string links. 

The paper is structured as follows. In Section~\ref{sec:kappa}, we recall the definition of the $\kappa$-invariant, define $\check{\kappa}$, and give the precise statement of the main theorem. Section~\ref{sec:kappa-kappa} is devoted to the translation between $\kappa$ and $\check{\kappa}$. The key advantage of working with homotopy string links is that they form a group.  In Section~\ref{S:string-links}, we describe monoid structures on the space of string links and our mapping space via an action of the little intervals operad.  We furthermore show that $\check{\kappa}$ is a monoid homomorphism. We put all these pieces together to prove the main theorem in Section~\ref{sec:proof of main theorem}. 
In Section \ref{sec:Taylor-tower} we describe the $\kappa$-invariant in terms of the functor calculus of Goodwillie and Weiss, in particular the Taylor tower for the space of link maps.

\section{The \texorpdfstring{$\kappa$}{k}-invariant}\label{sec:kappa}
Fix an $n$-tuple $\mathbf{a}=(a_1,\ldots,a_n)$ of distinct points in $\R^3$ and  consider the function space ${\link_{\mathbf{a}}(\sqcup^{n}_{i=1} S^1;\R^3) }$ of pointed   $n$-component smooth link maps ($n\geq 2$), i.e. smooth immersions, 
\begin{equation}\label{eq:link-map}
\begin{split}
L = (L_1,...,L_n) \co & (S^1,s_1)\sqcup\ldots \sqcup (S^1,s_n)  \xrightarrow{\qquad} \R^3,\\
& L_i(s_i)=a_i,\quad L_i(S^1)\cap L_j(S^1)= \emptyset,\ i\neq j,
\end{split}
\end{equation}
Two link maps $L$ and $L'$ are {\em link homotopic}  if 
there exists a  homotopy ${H \co \left(\bigsqcup^n_{i=1} S^1\right)\times I\to \R^3}$ connecting $L$ and $L'$ through pointed link maps. We denote the set of equivalence classes of pointed $n$-component link maps by $\Link(n)$, i.e. 
\begin{equation}\label{eq:LM(n)}
\Link(n)=\pi_0(\link_{\mathbf{a}}(\sqcup^{n}_{i=1} S^1; \R^3)).
\end{equation}

Recall, that for any $X$,
\[
\Cnf(X,n) := \{(x_1, \ldots , x_n) \in X^n\ |\ x_i \neq x_j \text{ for } i \neq j\},
\]
and that an inclusion of spaces $f\co X \hookrightarrow Y$ induces a map $\Cnf(f,n)\co \Cnf(X,n) \hookrightarrow \Cnf(Y,n)$.  
Notice that $\Cnf(\sqcup_{i=1}^n S^1, n)$ contains a copy of the $n$-torus $\T^n$, namely the connected component where the $i$th configuration point lies on the $i$th copy of $S^1$.
Now abbreviate 
\[
{\Cnf(n):=\Cnf(\R^3,n)}.
\]
The $\kappa$-invariant~\cite{Koschorke:1991a,Koschorke:1997} for classical links (i.e. 1-dimensional closed links in $\R^3$) is then defined via the map  
\begin{equation*}
\begin{split}
\link_{\mathbf{a}}(\sqcup^{n}_{i=1} S^1; \R^3) & \to \map_\mathbf{a}(\T^n, \Cnf(n)), \\
L &\mapsto \Cnf(L,n)|_{\T^n},
\end{split}
\end{equation*}  
where $\map_\mathbf{a}(\T^n, \Cnf(n))$ is the function space of continuous pointed maps from 
$\T^n$ to $\Cnf(\R^3, n)$.
Define $\kappa$ as the induced map on $\pi_0$:
\begin{equation}\label{eq:kappa}
\begin{split}
\kappa \co \Link(n) & \to [\T^n, \Cnf(n)],\\
\kappa([L]) & = [\Cnf(L,n)|_{\mathbb{T}^n}]
\end{split}
\end{equation}
where  $[\T^n, \Cnf(n)]=\pi_0(\map_\mathbf{a}(\T^n, \Cnf(n))$.

Note that we use pointed maps in \eqref{eq:link-map}--\eqref{eq:kappa} mainly for convenience (see 
\ref{app:basepoints}).  In his works~\cite{Koschorke:1991a,Koschorke:1997,Koschorke:2004}
Koschorke  introduced the following central question concerning the $\kappa$-invariant.
\begin{prob}[Koschorke~\cite{Koschorke:1997}]\label{q:koschorke}
	Is the $\kappa$-invariant injective and therefore a complete invariant of $n$-component classical links up to link homotopy?
\end{prob}

\no In \cite{Koschorke:1997} Koschorke showed that $\kappa$  is injective on link-homotopy classes of so-called Borromean link maps $\BLink(n)$\footnote{This space is denoted by $BLM(n)$ in \cite{Koschorke:1997}.} \cite[Theorem 6.1 and Corollary 6.2]{Koschorke:1997}. More recently, a positive answer to the above question was given for $n=3$~\cite{DeTurck-Gluck-Komendarczyk-Melvin:2013}.  A modern treatment of the subject via the Goodwillie calculus is presented in \cite{Munson:2011}.

In this paper, we study an analog of the map in \eqref{eq:kappa} in the case $\Link(n)$ is replaced by the group of homotopy string links $\mathcal{H}(n)$, $n\geq 2$. Recall that  homotopy string links were introduced by Habegger and Lin~\cite{Habegger-Lin:1990} to address the classification problem for closed homotopy links.   
\begin{figure}[htbp]
	\begin{overpic}[width=.3\textwidth]{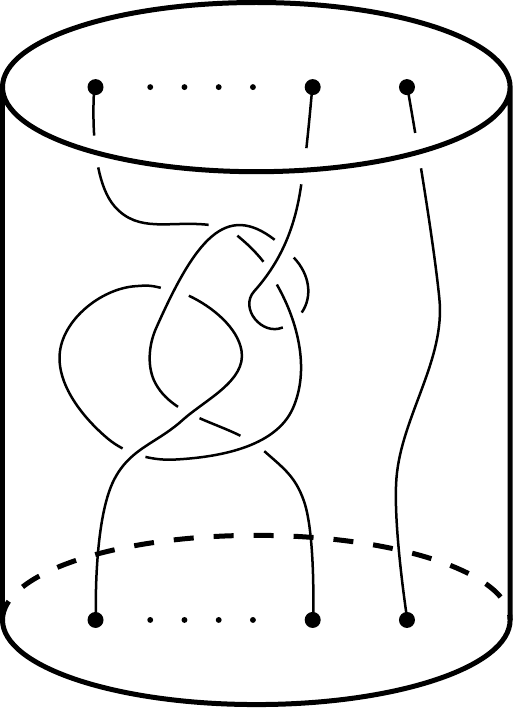}
		\put(12,7){$a_1$}
		\put(43,7){$a_{n-1}$}
		\put(57,7){$a_n$}
	\end{overpic}
	\caption{{\small $n$-component string link.} }
	\label{fig:s-link}
\end{figure} 
Following Habegger and Lin, consider a cylinder $\mathcal{C}=D^2 \times I$ in $\R^3$, where $D^2$ is the unit disk and $I=[0,1]$. Let $\mathbf{a}=(a_1,\ldots, a_n)$ be an $n$-tuple of marked points in $D^2$. A {\em string link} $\sigma$ is a smooth immersion of $n$ disjoint copies of the unit interval $I=I_i\cong [0,1]$ into  $\mathcal{C}$, i.e.
\[
\sigma = (\sigma_1,...,\sigma_n) \co I_1\sqcup\ldots \sqcup I_n \longrightarrow \mathcal{C},\qquad \sigma_i(I_i)\cap \sigma_j(I_j)=\emptyset,\ i\neq j, 
\]
where the $i$th  component satisfies
\begin{equation}\label{eq:sigma-endpoints}
\sigma_i(0)=\sigma|_{I_i}(0)=(a_i,0) \qquad \text{and}\qquad  \sigma_i(1)=\sigma|_{I_i}(1)=(a_i, 1).
\end{equation}
Here $\sigma_i \co I\longrightarrow \mathcal{C}$ denotes the $i$th strand of $\sigma$. Further, for technical reasons we assume that each strand $\sigma_i$ satisfies the following condition
\begin{equation}\label{eq:sigma-support}
\begin{minipage}{.6\textwidth}
\begin{center}
$\sigma_i\bigl((0,1)\bigr)$ is contained in the interior of $\mathcal{C}$ and each strand meets $D^2\times\{0\}$ and $D^2 \times\{1\}$ orthogonally.
\end{center}
\end{minipage}
\end{equation}
Denote the resulting function space by $\slink_{\mathbf{a}}(\sqcup^n_{i=1} I_i,\mathcal{C})$. Then 
\begin{equation*}
\mathcal{H}(n)=\pi_0(\slink_{\mathbf{a}}(\sqcup^n_{i=1} I_i,\mathcal{C}))
\end{equation*}
is the set (in fact, group) of link-homotopy classes of string links, a.k.a.~homotopy string links.
\no For related constructions of the function spaces of string links see the recent work~\cite{Koytcheff-Munson-Volic:2013}.
Notice that $\mathcal{H}(n)$ is the analog of $\Link(n)$: the former denotes link-homotopy classes of string links, while the latter denotes link-homotopy classes of closed links.

We will consider an appropriate subspace $\map_{\mathbf{a}}(I^n,\Cnf(\mathcal{C},n))$ of maps from the $n$-cube $I^n$ to the configuration space of $n$ points in $\mathcal{C}$; this subspace will contain the image of the map $\Cnf(\sigma)$ induced by a string link $\sigma$ on configuration spaces. 
Define $\map_{\mathbf{a}}(I^n,\Cnf(\mathcal{C},n))$ to be the subspace of
$f=(f_1,\ldots,f_n)$
which satisfy the following conditions:
\begin{itemize}
	\item[] (endpoints) for all $i=1,\dots ,n$,
	\begin{equation}\label{eq:endpoints}
	f_i|_{I_1\times \ldots \times I_{i-1}\times \{0\}\times I_{i+1}\times \ldots\times I_n}= c_{(a_i,0)},\quad f_i|_{I_1\times \ldots \times I_{i-1}\times \{1\}\times I_{i+1}\times \ldots\times I_n}= c_{(a_i,1)},
	\end{equation}
	\qquad \qquad \quad where $c_x$ denotes the constant map at a point $x$.
	%
	%
	\item[] (support) the image of the interior of $I^n$ under $f$ is in the interior of $\Cnf(\mathcal{C},n)$
	\begin{equation}\label{eq:support}
	f(\operatorname{int}(I^n))\subset \operatorname{int}(\Cnf(\mathcal{C},n)).
	\end{equation}
\end{itemize} 
Due to condition \eqref{eq:endpoints} we say that maps in $\map_\mathbf{a}(I^n,\Cnf(\mathcal{C},n))$ are {\em based at $\mathbf{a}=(a_1,\ldots, a_n)$}. 

Next, we can define the analog of the $\kappa$-map in \eqref{eq:kappa} for homotopy string links.  We start with the map 
\begin{equation*}
\begin{split}
\slink_{\mathbf{a}}(\sqcup^{n}_{i=1} I_i; \C) & \longrightarrow \map_\mathbf{a}(I^n, \Cnf(\C,n)), \\
\sigma &\longmapsto \Cnf(\sigma, n)|_{I^n}
\end{split}
\end{equation*}  
and define $\check{\kappa}$ to be the induced map on $\pi_0$,
\begin{equation}\label{eq:kappa-hat}
\begin{split}
\check{\kappa}\co \mathcal{H}(n) & \longrightarrow \mathcal{M}(n),\\
\check{\kappa}([\sigma]) & = [\Cnf(\sigma, n)|_{I^n}]
\end{split}
\end{equation}
where $\mathcal{M}(n)=\pi_0(\map_{\mathbf{a}}(I^n, \Cnf(\mathcal{C},n))$. It is straightforward to show that $\check{\kappa}$ is well defined, thanks to properties \eqref{eq:sigma-endpoints} and \eqref{eq:sigma-support}. The main theorem of this paper  can now be stated:
\begin{mthm}
	The map $\check{\kappa}: \mathcal{H}(n) \to \mathcal{M}(n)$ separates homotopy string links.
\end{mthm}
\no Equivalently, the main theorem says that $\check{\kappa}$ is injective.  Our mapping space is equivalent to a stage of the Taylor tower for the space of link maps \cite{Weiss:1996}.  However, this fact by itself does not facilitate the proof of our main theorem, so we postpone its discussion until Section \ref{sec:Taylor-tower}.

\section{Transition between  \texorpdfstring{$\kappa$}{k} and  \texorpdfstring{$\check{\kappa}$}{kk}}\label{sec:kappa-kappa}
There is a relation between the $\kappa$-invariant of Koschorke defined in \eqref{eq:kappa} and 
the $\check{\kappa}$ of \eqref{eq:kappa-hat} for string links,
 via the Markov ``closure'' operation: $\widehat{\,\cdot\,}:\mathcal{H}(n)\longrightarrow \Link(n)$. In the work of Habegger and Lin~\cite{Habegger-Lin:1990}, the closure $\widehat{\sigma}$ of a given string link $\sigma\in \mathcal{H}(n)$ is obtained by adding ``trival'' closing strands outside of the embedded cylinder $\mathcal{C}$. For our purposes we use an equivalent but different method of closing a string link. Consider an immersion 
\begin{equation*}
b: \mathcal{C}\longmapsto \mathcal{T}\subset \R^3,
\end{equation*}
which ``bends'' the cylinder $\mathcal{C}$ into a solid torus $\mathcal{T}$ embedded in $\R^3$. Under $b$ the bottom disk $D_0=D^2\times \{0\}$ of $\mathcal{C}$ is identified with the top disk $D_1=D^2\times\{1\}$ into a cross-sectional disk in $\mathcal{T}$. The endpoints of strands are glued under $b$, i.e. $(a_i,0)\sim (a_i,1)$, and the resulting $n$-tuple of points is again denoted by $\mathbf{a}=(a_1,\ldots, a_n)$. Clearly, for any string link $\sigma$ in $\mathcal{C}$, $b(\sigma)$ is a closed link, with each $i$-component based at $a_i$ and contained in $\mathcal{T}\subset\R^3$, therefore we obtain a map
\[ 
b\co \slink_{\mathbf{a}}(\sqcup^n_{i=1} I_i,\mathcal{C})\longrightarrow \link_{\mathbf{a}}(\sqcup^n_{i=1} S^1_i,\R^3)
\]
equivalent (modulo link homotopy) to the usual Markov closure map $\widehat{\cdot}$.
Since $b$ is an embedding on $\operatorname{int}(\mathcal{C})$ it descends to a map of configuration spaces 
\[
\hat{b}\co \Cnf(\operatorname{int}(\mathcal{C}),n)\longrightarrow \Cnf(\mathcal{T},n)\subset\Cnf(\R^3,n).
\]

To get an induced map on function spaces, we need to impose one more condition on our maps, so as to make them ``roundable.''  Define $\map_{\mathbf{a}}^\circ(I^n,\Cnf(\mathcal{C},n))$ to be the subspace of maps in $\map_{\mathbf{a}}(I^n,\Cnf(\mathcal{C},n))$
satisfying the following condition:
	\begin{itemize}
	\item[] (periodicity) for all $i=1,\dots ,n$ and $k\neq i$,
	\begin{equation}\label{eq:periodicity}
	 f_k|_{I_1\times \ldots \times I_{i-1}\times \{0\}\times I_{i+1}\times \ldots\times I_n}=f_k|_{I_1\times \ldots \times I_{i-1}\times \{1\}\times I_{i+1}\times \ldots\times I_n}.
	\end{equation}
	\end{itemize}
The image of $\slink_{\mathbf{a}}(\sqcup^n_{i=1} I_i,\mathcal{C})$ under $\check{\kappa}$ is clearly contained in this subspace $\map_{\mathbf{a}}^\circ(I^n,\Cnf(\mathcal{C},n))$.  By abuse of notation, we will also use $\check{\kappa}$ to denote the map with this smaller codomain.
	
Now 
$\hat{b}$ gives a well defined map between the following function spaces\footnote{We use the same symbols $b$ and $\hat{b}$ for these various maps, as it should be obvious from the context which one is applied.}
\[
\hat{b}\co \map_{\mathbf{a}}(I^n, \Cnf(\mathcal{C},n)) \longrightarrow \map_{\mathbf{a}}^\circ(\T^n, \Cnf(n)),
\]
defined by $\hat{b}(f)=b\circ f$. Note that $\hat{b}(f)$ is a well defined map on $\T^n$ thanks to~\eqref{eq:endpoints}, \eqref{eq:support}, and~\eqref{eq:periodicity}. 
The definitions of $b$, $\hat{b}$, $\kappa$, and $\check{\kappa}$ immediately imply that $\hat{b}\circ \check{\kappa}=\kappa\circ b$ on function spaces.
So if we define $\mathcal{M}^\circ(n)= \pi_0(\map_{\mathbf{a}}^\circ(I^n,\Cnf(\mathcal{C},n)))$, 
 we obtain the following diagram at the level of path-components:
\begin{equation}\label{diag:kappa-to-kappa}
\begin{tikzpicture}[baseline=(current bounding box.center),description/.style={fill=white,inner sep=2pt}]
\matrix (m) 
[
matrix of math nodes
, row sep=3em
, column sep=2.5em, 
text height=1.0ex, text depth=0.25ex
]
{ \mathcal{H}(n)  & & \mathcal{M}^\circ (n)\\
	\Link(n) & & \text{$[\T^n, \Cnf(\R^3,n)]$}\\ 
};
\path[->,font=\scriptsize]

(m-1-1) edge node[auto] {$\check{\kappa}$} (m-1-3)
(m-2-1) edge node[auto] {$\kappa$} (m-2-3)

(m-1-1) edge node[auto] {$b$} (m-2-1)

(m-1-3) edge node[auto] {$\hat{b}$} (m-2-3);
\end{tikzpicture}
\end{equation}
The map $\mathcal{H}(n) \to \mathcal{M}(n)$ from our main theorem obviously factors through the top horizontal map above.  Hence our main theorem implies injectivity of the latter map.

\section{Relevant algebraic structures}\label{S:string-links}

\subsection{Group structure on \texorpdfstring{$\mathcal{H}(n)$}{H(n)}} 
Following \cite{Habegger-Lin:1990}, $\mathcal{H}(n)$ has a group structure where the multiplication is defined by ``stacking'' string links in a vertical fashion. 
First define two maps 
${l,u \co I \to I}$,
lower and upper, by rescaling the 
interval
to its lower and upper half respectively.  Define $l^{-1}$ and $u^{-1}$ as (the restrictions to $I$ of) the inverses of (the affine-linear extensions to $\R$ of) $l$ and $u$.
Specifically, 
\begin{equation}\label{eq:u-and-l}
\begin{split}
l, u \co 
I  \longrightarrow 
I, &\qquad l^{-1},u^{-1}\co I  \longrightarrow I \\
l(t)  ={\textstyle \frac{1}{2} t}, &\qquad l^{-1}(t) = 2t,\\
u(t)  ={\textstyle \frac{1}{2} t+\frac{1}{2}}, & \qquad u^{-1}(t) = 2t-1.
\end{split}
\end{equation}
For $\sigma, \sigma':I_1\sqcup \ldots \sqcup I_n\longmapsto \mathcal{C}$, the product $\sigma\ast \sigma'$ is given on each factor by 
\begin{equation}\label{eq:s-link-prod}
(\sigma\ast \sigma')_i(t_i)=\begin{cases}
(l \x \id_{D^2}) \circ \sigma_i \circ l^{-1} (t_i), & \quad t_i\in [0, \frac 12],\\
(u \x \id_{D^2}) \circ \sigma'_i \circ u^{-1} (t_i), & \quad t_i\in [\frac 12, 1],
\end{cases}
\end{equation}
i.e. the usual product of paths $(l \x \id_{D^2}) \circ \sigma$ and $(u \x \id_{D^2}) \circ \sigma'$ in $\mathcal{C}$. The result satisfies~\eqref{eq:sigma-endpoints} and~\eqref{eq:sigma-support}, so it is a well defined element of $\slink_{\mathbf{a}}(\sqcup^n_{i=1} I_i,\mathcal{C})$.

In fact, Habegger and Lin showed that $\mathcal{H}(n)$ is a group by exhibiting it as an extension of groups in the split short exact sequence \cite[Lemma 1.8]{Habegger-Lin:1990}
\begin{equation}\label{eq:H(n)-short-exact}
1 \longrightarrow \mathcal{K}_i(n-1) \xrightarrow{\quad\quad} \mathcal{H}(n) \xrightarrow{\quad\delta_i\quad}\mathcal{H}_i(n-1)\longrightarrow 1.
\end{equation}
Here $\mathcal{H}_i(n-1)$ is the copy of $\mathcal{H}(n-1)$ obtained as the image of the map $\delta_i$ which deletes the $i$th strand; i.e., $\mathcal{H}_i(n-1)$ consists of all $(n-1)$-component string links based at ${\mathbf{a}_{\widehat{i}}=(a_1,\ldots, \widehat{a_i},\ldots, a_n)}$. The normal subgroup $\mathcal{K}_i(n-1)$ is isomorphic to the {\em reduced free group} $RF(n-1)$ on $n-1$ generators. 

\begin{rem}
This exact sequence above is analogous to the sequence 
\begin{equation}\label{eq:PB(n)-short-exact}
1 \longrightarrow F(n-1) \xrightarrow{\quad\quad} PB(n) \xrightarrow{\quad\delta_i\quad} PB_i(n-1)\longrightarrow 1.
\end{equation}
where $PB(n)$ is the pure braid group on $n$ strands, $PB_i(n)$ is the copy of $PB(n-1)$ obtained by deleting the $i$th strand of an element of $PB(n)$, and $F(n-1)$ is the free group on $n-1$ generators.  
In fact, the sequence (\ref{eq:H(n)-short-exact}) can be obtained from (\ref{eq:PB(n)-short-exact}) by the quotient $F(n) \to RF(n)$ \cite{Goldsmith:1973}, \cite[p. 399]{Habegger-Lin:1990}.
Thus the inclusion of the space of pure braids into the space of link maps of string links induces a surjection on path-components $PB(n) \to \mathcal{H}(n)$, and any string link is link-homotopic to a pure braid.  
\end{rem}

Building on Habegger and Lin's construction, consider the homomorphism
\begin{equation}\label{eq:delta}
\delta = \prod_{i=1}^n \delta_i\co \mathcal{H}(n) \xrightarrow{\qquad \quad} \prod^n_{i=1} \mathcal{H}_i(n-1),
\end{equation}
where $\delta_i$ is as defined in \eqref{eq:H(n)-short-exact}. Then 
\begin{equation*}
\ker \delta = \mathcal{K}_1(n-1) \cap \mathcal{K}_2(n-1) \cap \ldots \cap \mathcal{K}_n(n-1).
\end{equation*}
The string links representing elements of $\ker \delta$ have a natural geometric meaning:  they are precisely the string links which become link-homotopically trivial after removing \emph{any} of their components, which we call\footnote{Some authors use the term {\em Brunnian links}.} {\em Borromean string links} and denote by $Br\mathcal{H}(n)$; i.e., 
\begin{equation}\label{eq:BH(n)-def}
Br\mathcal{H}(n):=\ker\delta.
\end{equation}
\no The structure of $Br\mathcal{H}(n)$ is well understood, in particular $Br\mathcal{H}(n)$  is isomorphic to a direct product of $(n-2)!$ copies of integers, generated by length 
$(n-1)$ iterated commutators in $RF(n-1)$, (see Lemma 2.2 in \cite{Cohen-Komendarczyk-Shonkwiler:2015}). The subspace of $\BLink(n)$ of Borromean links in $\Link(n)$, i.e. links which become trivial after removing any one of the components, is the image of $Br\mathcal{H}(n)$ under the closure map.

We summarize the needed results of \cite{Cohen-Komendarczyk-Shonkwiler:2015}; note that $(ii)$ has also been proved by Koschorke~\cite{Koschorke:1997}. 
\begin{thm}\label{thm:old-main} \ 
	\begin{itemize}
		\item[$(i)$] The closure $b:\mathcal{H}(n)\longrightarrow \Link(n)$ restricted to $Br\mathcal{H}(n)$ is one-to-one; i.e., $b$ induces an isomorphism of $Br\mathcal{H}(n)$ with $\BLink(n)$.
		\item[$(ii)$] $\kappa$ given in \eqref{eq:kappa} is one-to-one on $\BLink(n)$.
	\end{itemize}
\end{thm}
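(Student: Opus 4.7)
Both parts of the theorem rest on the classification of Borromean string links and closed Borromean links by their first non-vanishing Milnor invariants, namely the $\overline{\mu}$-invariants $\overline{\mu}(I)$ indexed by sequences $I = (i_1, \ldots, i_n)$ of $n$ distinct entries.  The plan is to show that both $b$ and $\kappa$ preserve these length-$n$ invariants and that the latter separate the relevant classes, so both maps become identifications of two free abelian groups of rank $(n-2)!$.

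For part $(i)$, I would first invoke Habegger and Lin's realization of $\mathcal{H}(n)$ via the tower of extensions by reduced free groups in~\eqref{eq:H(n)-short-exact}, which implies that $Br\mathcal{H}(n) = \ker \delta$ is a free abelian group of rank $(n-2)!$, with coordinates given by the length-$n$ $\overline{\mu}$-invariants (the Hall basis of length-$n$ commutators in $RF(n-1)$).  Dually, Milnor's original classification shows that closed Borromean links in $\BLink(n)$ are classified by the same length-$n$ $\overline{\mu}$-invariants, yielding $\BLink(n)\cong \Z^{(n-2)!}$.  Since the Milnor invariants of a closed link may be computed from any string-link representative via its Magnus expansion in the reduced free group, the closure map $b$ is coordinate-preserving.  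Hence $b$ restricted to $Br\mathcal{H}(n)$ is a homomorphism between two free abelian groups of the same rank which is the identity on $\overline{\mu}$-coordinates, and therefore an isomorphism.

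For part $(ii)$, the strategy is to compute $\kappa$ explicitly on Borromean classes.  The rational cohomology $H^*(\Cnf(n);\Q)$ is the Arnold/Orlik--Solomon algebra, generated by degree-two classes $\omega_{ij}$ corresponding to pairwise linking.  For a Borromean link $L$, the induced map $\kappa(L)^* \co H^*(\Cnf(n);\Q)\to H^*(\T^n;\Q)$ vanishes on every generator $\omega_{ij}$, since all pairwise linking numbers vanish.  One can then run a secondary Massey-product computation on $\kappa(L)$: the $\kappa$-invariant detects an $n$-fold Massey-product-like class in $H^{n-1}(\T^n;\Q)$ which, by direct comparison with Milnor's integral (or combinatorial) formulae for $\overline{\mu}(I)$, agrees with the length-$n$ Milnor invariant up to sign.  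Combined with the classification of $\BLink(n)$ already recalled in $(i)$, this yields injectivity of $\kappa$ on $\BLink(n)$.

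The main obstacle is carrying out the Massey-product computation in part $(ii)$ and verifying that it captures the correct integer-valued Milnor invariants rather than merely their rational images.  This requires working at the level of pointed homotopy classes of maps, not just cohomology, which in turn demands a rational homotopy model of $\Cnf(n)$ together with the (formal) model of $\T^n$, and a careful counting argument showing that the number of independent higher Massey products on $\kappa(L)$ matches the rank $(n-2)!$ produced by Habegger--Lin's structure theorem.  Matching these ranks is what ultimately makes the vertical maps in diagram~\eqref{diag:kappa-to-kappa} compatible on Borromean classes.
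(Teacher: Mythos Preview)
The paper does not give its own proof of this theorem: it is quoted as a summary of results from \cite{Cohen-Komendarczyk-Shonkwiler:2015}, with the remark that part $(ii)$ was also established by Koschorke in \cite{Koschorke:1997}.  So there is nothing in the present paper to compare your argument against; the statement functions here as a black-box input to Lemma~\ref{lem:check-kappa-on-BH(n)} and the proof of the Main Theorem.

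That said, your outline is broadly in the right direction and is close in spirit to what those references do.  For part $(i)$, identifying $Br\mathcal{H}(n)$ and $\BLink(n)$ with $\Z^{(n-2)!}$ via length-$n$ Milnor invariants and noting that closure preserves them is exactly the mechanism used.  For part $(ii)$, your idea of extracting secondary (Massey-type) invariants from $\kappa(L)$ is close to Koschorke's argument, but you have correctly flagged the genuine difficulty: a purely rational-cohomology or rational-homotopy computation only shows that $\kappa$ separates Borromean links rationally, and one needs an integral argument to match the full $\Z^{(n-2)!}$.  In \cite{Cohen-Komendarczyk-Shonkwiler:2015} this is handled not by Massey products per se but by analyzing $[\T^n,\Cnf(n)]$ directly (using the structure of $\pi_*(\Cnf(n))$ and iterated Whitehead/Samelson products) and exhibiting an explicit splitting that detects the length-$n$ commutator generators of $Br\mathcal{H}(n)$.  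If you intend to supply a self-contained proof, that is the step where your sketch would need the most work.
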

\no As a corollary we obtain:
\begin{lem}\label{lem:check-kappa-on-BH(n)}
	The map $\check{\kappa}$ is one-to-one on $Br\mathcal{H}(n)$. 
\end{lem}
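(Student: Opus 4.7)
The proof I would give is essentially a diagram chase on (\ref{diag:kappa-to-kappa}), leveraging the two parts of Theorem \ref{thm:old-main}. The plan is to show that $\check{\kappa}|_{Br\mathcal{H}(n)}$ factors as an injection followed by an injection, so it must itself be injective.

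More concretely, suppose $\sigma, \sigma' \in Br\mathcal{H}(n)$ satisfy $\check{\kappa}(\sigma) = \check{\kappa}(\sigma')$. First observe that, as noted in Section~\ref{sec:kappa-kappa}, $\check{\kappa}$ sends $\mathcal{H}(n)$ into $\mathcal{M}^\circ(n)$, so we may apply the map $\hat{b}\co \mathcal{M}^\circ(n) \to [\T^n,\Cnf(n)]$ to both sides. Using the commutativity $\hat{b}\circ \check{\kappa} = \kappa \circ b$ from diagram (\ref{diag:kappa-to-kappa}), this yields $\kappa(b(\sigma)) = \kappa(b(\sigma'))$. Since $b$ carries $Br\mathcal{H}(n)$ into $\BLink(n)$ (in fact isomorphically, by Theorem~\ref{thm:old-main}(i)), the classes $b(\sigma)$ and $b(\sigma')$ lie in $\BLink(n)$. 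Applying Theorem~\ref{thm:old-main}(ii), which asserts that $\kappa$ is injective on $\BLink(n)$, we conclude $b(\sigma) = b(\sigma')$. Finally, Theorem~\ref{thm:old-main}(i) says $b$ is one-to-one on $Br\mathcal{H}(n)$, so $\sigma = \sigma'$.

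There is no real obstacle here; the lemma is essentially formal once the diagram (\ref{diag:kappa-to-kappa}) has been set up and the two injectivity statements of Theorem~\ref{thm:old-main} are in hand. The only subtlety worth mentioning is making sure the codomain refinement from $\mathcal{M}(n)$ to $\mathcal{M}^\circ(n)$ is legitimate for elements of $Br\mathcal{H}(n)$ so that $\hat{b}$ can be applied; this is guaranteed by the periodicity condition (\ref{eq:periodicity}) being automatically satisfied by maps of the form $\Cnf(\sigma,n)|_{I^n}$ coming from actual string links.
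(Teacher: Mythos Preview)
Your proof is correct and follows exactly the same route as the paper's own argument: a diagram chase on (\ref{diag:kappa-to-kappa}) using the injectivity of $b$ on $Br\mathcal{H}(n)$ and of $\kappa$ on $\BLink(n)$ from Theorem~\ref{thm:old-main}. The paper's version is simply terser, omitting the explicit element-chase and the remark about passing through $\mathcal{M}^\circ(n)$.
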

\begin{proof}
	The claim follows from diagram~\eqref{diag:kappa-to-kappa} where both $b$ and $\kappa$ are one-to-one when restricted  to $Br\mathcal{H}(n)$ and $\BLink(n)$ respectively. 
\end{proof}

\subsection{A monoid structure on \texorpdfstring{$\mathcal{M}(n)$}{M(n)}}
\no We now define a multiplication on $\map_\mathbf{a}(I^n, \Cnf(\mathcal{C},n))$ which turns $\mathcal{M}(n)$ into a monoid and makes $\check{\kappa}$ into a monoid homomorphism. 

Subdivide the unit cube 
\[
I^n=I_1\times\ldots\times I_n,\quad I_i=[0,1],
\]
with coordinates $(t_1,\ldots,t_n)$, into $2^n$ subcubes simply by splitting each edge into half. Label the first half $[0,\frac 12]\subset I_i$ by $l$ and the second half $[\frac 12, 1]\subset I_i$ by $u$.  Then any subcube is determined by an $n$-tuple 
$\mathbf{z}=(z_1,z_2,\ldots,z_n)$, where each $z_j$ is either
$l$ or $u$.
This sequence of $l$'s and $u$'s may be viewed as a function $\mathbf{z} = \mathbf{z}(t_1,...,t_n)$.
Furthermore, we write $\mathbf{z}^{-1}$ for $(z_1^{-1},...,z_n^{-1})$, recalling the definitions in \eqref{eq:u-and-l}.  Then $\mathbf{z}^{-1}(t_1,...,t_n)$ can be interpreted as the map that rescales the subcube containing $(t_1,...,t_n)$ to the unit cube.

Given maps $f=(f_1,...,f_n)$ and $g=(g_1,...,g_n)$ in $\map_\mathbf{a}(I^n, \Cnf(\mathcal{C},n))$, define 
\begin{equation}\label{eq:map-prod}
(f \ast g)_i(t_1,...,t_n)=\begin{cases}
\left( (l \x \id_{D^2}) \circ f_i \circ (\mathbf{z}^{-1}(t_1,...,t_n)) (t_1,...,t_n) \right), & \quad t_i\in [0, \frac 12], \\
\left( (u \x \id_{D^2}) \circ g_i \circ (\mathbf{z}^{-1}(t_1,...,t_n)) (t_1,...,t_n) \right), & \quad t_i\in [\frac 12, 1].
\end{cases}
\end{equation}

\begin{figure}[htbp]
	\begin{overpic}[width=.8\textwidth]{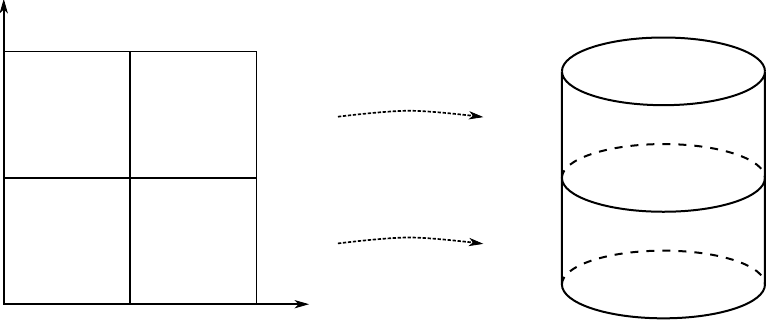}
		\put(-4,9){$f_2$}
		\put(-4,25){$g_2$}
		\put(-2,43){$t_2$}
		\put(7,-1){$f_1$}
		\put(23,-1){$g_1$}
		\put(41,0){$t_1$}
		\put(32,39){\Large{$I^2$}}
		\put(3,9){$(f_1,f_2)$}
		\put(20,9){$(g_1,f_2)$}
		\put(3,25){$(f_1,g_2)$}
		\put(20,25){$(g_1,g_2)$}
		\put(52,12){$f$}
		\put(52,29){$g$}
		\put(101,4){$0$}
		\put(101,18){$\frac{1}{2}$}
		\put(101,32){$1$}
	\end{overpic}
	\caption{{\small The product of two maps $f$ and $g$.} }
	\label{fig:moniod-prod}
\end{figure} 

The fact that this is a well defined, homotopy-associative multiplication on our mapping space can be proven directly using the two defining conditions \eqref{eq:endpoints} and \eqref{eq:support} on our mapping space.  Such a direct proof would be similar to the proof that the fundamental group of a space is a group (i.e., that multiplication on a loop space is homotopy-associative).  Instead of pursuing such an argument, we will equip our mapping space with an action of the {little intervals operad} (a.k.a.~{little 1-cubes operad}) $\mathcal{L}$, a structure that naturally arises on loop spaces \cite{May:1972}, our mapping space, and the space of string links itself.  

\subsection{Little intervals actions}
Recall that the 
\emph{little intervals operad} consists of spaces $\{\LL(k)\}_{k \in \mathbb{N}}$, where $\LL(k)$ is the space of $k$ subintervals $(L_1,...,L_k)$ of $I=[0,1]$ with disjoint interiors.  We may also regard each $L_i$ as the unique affine-linear map $\R \to \R$ which sends $I$ to $L_i$.  From this perspective, we can make sense of the inverse map $L^{-1}$.  Recall that an \emph{action} of the operad $\LL$ on a space $X$ consists of continuous maps 
\begin{equation}
\label{eq:operad-action-maps}
\LL(k) \x X^k \to X
\end{equation}
satisfying the following identity, symmetry, and associativity conditions:
\begin{itemize}
\item
The identity interval $I \in \LL(1)$ induces the identity map $X \to X$.
\item
The map (\ref{eq:operad-action-maps}) is invariant with respect to the (diagonal) action of the symmetric group $\Sigma_k$ on $\LL(k) \x X^k$, given by permuting the $k$ intervals and the $k$ copies of $X$.
\item
The following square commutes:
\begin{equation}\label{eq:associativity}
\begin{tikzpicture}[baseline=(current bounding box.center),description/.style={fill=white,inner sep=2pt}]
\matrix (m) 
[
matrix of math nodes
, row sep=3em
, column sep=2.5em, 
text height=1.0ex, text depth=0.25ex
]
{ 
\LL(m) \x \LL(k_1) \x ... \x \LL(k_m) \x X^{k_1+...+k_m}  & & \LL(m) \x X^m \\
\LL(k_1+...+k_m) \x X^{k_1+...+k_m} & & X\\
};
\path[->,font=\scriptsize]
(m-1-1) edge (m-1-3)
(m-2-1) edge (m-2-3)
(m-1-1) edge (m-2-1)
(m-1-3) edge (m-2-3);
\end{tikzpicture}
\end{equation}
%
%
\end{itemize}
We refer the reader to \cite{May:1972, Markl-Shnider-Stasheff:2002, McClure-Smith:2004} for more information on operads.

Let $\iota_i : \R \to D^2 \x \R$ be the map given by $\iota_i(t) = (a_i, t)$, and let $p: D^2 \x \R \to \R$ be the projection, which satisfies $p \circ \iota_i=\mathrm{id}_\R$ for all $i$.

We begin with an action of $\LL$ on the space of string links.  
To a string link $\sigma$, we extend each component $\sigma_i$ to a map $\overline{\sigma}_i: D^2 \x \R \to D^2 \x \R$, where $i=1,...,n$.  We define this map by
\[
\overline{\sigma}_i(x) :=
\left\{
\begin{array}{ll}
\sigma_i(p(x)) & \mbox{if $p(x) \in I$} \\
x & \mbox{if $p(x) \notin I$}.
\end{array}
\right.
\]
Each $\overline{\sigma}_i$ is not continuous everywhere, but it \emph{is} continuous at any $x$ satisfying the condition 
\begin{equation}
\label{sigma-boundary-are-basepoints}
p(x) \in \partial I \Rightarrow x \in \{a_i\} \x \partial I.
\end{equation}

Now let $(L_1,...,L_k)\in \LL(k)$ be $k$ little intervals, and let $\sigma^1,...,\sigma^k$ be $k$ string links.\footnote{Note that superscripts indicate different string links, while subscripts still denote components of a string link.  Thus each $\sigma^j = (\sigma^j_1,...,\sigma^j_n)$.  When discussing an arbitrary index, will use $i$ for the index for the component and $j$ for the index of the little interval and corresponding string link (i.e., $L_j$ acts on $\sigma^j$).}
Define a new string link 
\[
((L_1,...,L_k) \cdot (\sigma^1,...,\sigma^k)) : I_1 \sqcup ... \sqcup I_n \to \C
\]
by defining its $i$th component $((L_1,...,L_k) \cdot (\sigma^1,...,\sigma^k))_i (t_i) := $
\begin{equation}
\label{eq:sigma-intervals-action}
\left( (L_k \x \mathrm{id}_{D^2}) \circ \overline{\sigma}^k_i \circ (L_k^{-1} \x \mathrm{id}_{D^2}) \right)
\circ ...\circ 
\left( (L_1 \x \mathrm{id}_{D^2}) \circ \overline{\sigma}^1_i \circ (L_1^{-1} \x \mathrm{id}_{D^2}) \right)
(\iota_i(t_i))
\end{equation}
where symbols such as $(L^{-1})^n$ denote maps induced on $n$-fold cartesian products by maps such as $L^{-1}$.

\begin{prop}
Formula \eqref{eq:sigma-intervals-action} defines an operad action of $\LL$ on the space of string links.
\end{prop}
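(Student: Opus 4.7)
The plan is to check the three operad action axioms (identity, $\Sigma_k$-equivariance, associativity), but first one must confirm that formula~\eqref{eq:sigma-intervals-action} actually produces an element of $\slink_{\mathbf{a}}(\sqcup_{i=1}^n I_i,\mathcal{C})$. For this, the delicate point is continuity, since each $\overline{\sigma}_i^j$ is continuous only at points $x$ satisfying the condition~\eqref{sigma-boundary-are-basepoints}. I would begin by analyzing a single factor $(L_j\x\id_{D^2})\circ \overline{\sigma}_i^j\circ (L_j^{-1}\x\id_{D^2})$ applied to a point $y=(d,t)\in D^2\x\R$: it acts as the identity whenever $t\notin L_j$, and otherwise sends $y$ to $\sigma_i^j(L_j^{-1}(t))$ followed by rescaling of the $\R$-coordinate by $L_j$. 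The two cases agree at $t\in\partial L_j$ thanks to the endpoint conditions $\sigma_i^j(0)=(a_i,0)$ and $\sigma_i^j(1)=(a_i,1)$ from~\eqref{eq:sigma-endpoints}, and the output always has its $\R$-coordinate in $I$.

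Since the little intervals $L_1,\ldots,L_k$ have pairwise disjoint interiors, iterating this observation shows that the $i$th component of $(L_1,\ldots,L_k)\cdot(\sigma^1,\ldots,\sigma^k)$ has the explicit form: a rescaled copy of $\sigma_i^j$ on each subinterval $L_j\subset I$, joined by the constant strand $\{(a_i,t)\}$ outside $\bigcup_j L_j$. From this description, the endpoint condition~\eqref{eq:sigma-endpoints}, the support condition~\eqref{eq:sigma-support}, and the pairwise disjointness of the resulting strands are all immediate: inside each $L_j$ they are inherited from $\sigma^j$ (with the support axiom preserved by the diffeomorphism $L_j\x\id_{D^2}$), while outside $\bigcup_j L_j$ the strands lie on the disjoint vertical lines $\{a_i\}\x\R$.

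Given this explicit description, the three axioms are routine. The identity axiom is $L_1=I$, in which case each rescaling is trivial and the composition collapses to $\overline{\sigma}_i(\iota_i(t_i))=\sigma_i(t_i)$. For $\Sigma_k$-equivariance, any two factors indexed by $j\neq j'$ commute in the composition because each acts as the identity on the support of the other; this promotes to invariance under the diagonal $\Sigma_k$-action on $\LL(k)\x(\text{string links})^k$. For associativity, I would chase diagram~\eqref{eq:associativity} directly: both routes insert, at each doubly-nested little interval $L_\ell\circ L^\ell_r\subset I$, a rescaled copy of $\sigma^{\ell,r}$, and the conjugation structure of~\eqref{eq:sigma-intervals-action} together with the identity $(L_\ell\circ L^\ell_r)^{-1}=(L^\ell_r)^{-1}\circ L_\ell^{-1}$ guarantees that the two rescalings coincide on the nose.

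The main obstacle, as already flagged, is the continuity argument: one must verify that the input to every occurrence of $\overline{\sigma}_i^j$ in the iterated composition satisfies~\eqref{sigma-boundary-are-basepoints}, even at exceptional values of $t_i$ that lie on the boundary of several $L_j$'s simultaneously. This matching is precisely what the endpoint conditions in~\eqref{eq:sigma-endpoints} are designed to provide, so the obstacle is resolved by a careful case analysis rather than by any new input.
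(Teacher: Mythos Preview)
Your proposal is correct and follows essentially the same approach as the paper's proof: you verify continuity via condition~\eqref{sigma-boundary-are-basepoints} using the disjointness of the $L_j$ together with the endpoint and support conditions~\eqref{eq:sigma-endpoints}--\eqref{eq:sigma-support}, then check identity, symmetry, and associativity just as the paper does, invoking the same conjugation identity for the last. Your explicit description of the resulting string link as rescaled copies of the $\sigma_i^j$ joined by vertical segments at $a_i$ is a helpful elaboration, but the underlying argument is the same.
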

\begin{proof}
First we claim that the formula defines a continuous map from $I_i$ to $D^2 \x I$.  In fact, the disjointness  of the interiors of the $L_j$, together with the conditions (\ref{eq:sigma-endpoints}) and \eqref{eq:sigma-support} on string links guarantee that at every stage of the composition, the input into a $\overline{\sigma}_i$ satisfies condition \eqref{sigma-boundary-are-basepoints}.  It is clear that the image of the map in  \eqref{eq:sigma-intervals-action} lies in $D^2 \x I \subset D^2 \x \R$.  It is also clear that this string link \eqref{eq:sigma-intervals-action} varies continuously as the $L_j$ and $\sigma^j$ vary.

Next we check the three conditions for an operad action, given at the beginning of this subsection.
For the identity condition, consider formula \eqref{eq:sigma-intervals-action} with $k=1$, $L_1=I$ (or $\mathrm{id}_{\R}$, if we view it as a map), and $\sigma=\sigma^1$ any string link.  Then the formula reduces to 
\begin{align*}
(I \cdot \sigma)_i(t) =
\overline{\sigma}_i(\iota(t)) = 
\sigma_i(p \circ \iota(t)) = \sigma_i(t)
\end{align*}
showing that $I \cdot \sigma = \sigma$, as desired.

For the symmetry condition, applying the same permutation in $\Sigma_k$ to the $L_j$ and the $\sigma^j$ will just permute the $k$ terms in the composition.  But the fact that $\overline{\sigma}_i$ is the identity outside of $D^2 \x (0,1)$, together with the disjointness of the interiors of the $L_j$, implies that permuting the terms in the composition leaves the map unchanged.

For the associativity condition, we must check commutativity of the square \eqref{eq:associativity} with $X$ the space of string links.  This is elementary, though complicated to write in detail.  It essentially amounts to the fact that for a string link $\sigma$ and two intervals, say $L_m, L_k$,
\[
\begin{split}
((L_m  \circ L_k) \x \id_{D^2}) \circ \overline{\sigma}_i \circ ((L_m  \circ L_k)^{-1} \x \id_{D^2}) & =\\
 (L_m \x \id_{D^2}) \circ (L_k \x \id_{D^2}) & \circ \overline{\sigma}_i 
\circ (L_k^{-1} \x \id_{D^2})
\circ (L_m^{-1} \x \id_{D^2})
\end{split}
\]
and the fact that $\overline{\sigma}_i$ is the identity outside of $D^2 \x I$.
\end{proof}
It is clear that if we put $k=2$, $L_1=[0,\frac{1}{2}]$, and $L_2=[\frac{1}{2},1]$, we recover the product of stacking string links given in \eqref{eq:s-link-prod}.\\

We now define the little intervals action on our mapping space.
To a map $f=(f_1,..,f_n) \in \map_\mathbf{a}(I^n, \Cnf(\mathcal{C},n))$ we first associate a map 
$\overline{f}: (D^2 \x\R)^n \to \Cnf(D^2 \x \R,n)$.  
Let $\mu: \R \to I$ be the unique continuous monotonic map which is the identity on $I$.
Then define $\overline{f} = (\overline{f}_i,...,\overline{f}_n)$ by 
\[
\overline{f}_i(x_1,...,x_n) :=
\left\{
\begin{array}{ll}
f_i( \mu(p(x_1)),...,\mu(p(x_n))) & \mbox{if\quad $p(x_i) \in I$}, \\
x_i & \mbox{if\quad $p(x_i) \notin I$}.
\end{array}
\right.
\]
Each $\overline{f}_i$ is not continuous at all points $(x_1,...,x_n)$,
but by condition (\ref{eq:endpoints}), it \emph{is} continuous at points satisfying 
\begin{equation}
\label{eq:boundary-points-are-basepoints}
\forall i  \quad
x_i \in D^2 \x \partial I
\Rightarrow x_i \in \{a_i\} \x \partial I.
\end{equation}



Now let $(L_1,...,L_k) \in \LL(k)$ be $k$ little intervals.  
Let $f^1,...,f^k$ be $k$ maps in\\ $\map_\mathbf{a}(I^n, \Cnf(\mathcal{C}, n))$, with 
each $f^j = (f^j_1,...,f^j_n)$.
Define a new map 
\[
((L_1,...,L_k)\cdot (f^1,...,f^k)) : I^n \to \Cnf(\C,n)
\]
by setting
\begin{equation}
\label{eq:intervals-action}
\begin{split}
((L_1,...,L_k)\cdot (f^1,...,f^k))(t_1,...,t_n) & :=\\ 
\bigl( (L_k \x \mathrm{id}_{D^2})^n  \circ \overline{f}^k \circ & (L_k^{-1} \x \mathrm{id}_{D^2})^n \bigr)
\circ \ldots  \\
 \ldots \circ \bigl( (L_1 \x \mathrm{id}_{D^2})^n  & \circ   \overline{f}^1 \circ (L_1^{-1} \x \mathrm{id}_{D^2})^n \bigr)(\iota_1(t_1),\ldots,\iota_n(t_n)).
\end{split}
\end{equation}

\begin{thm}
The formula \eqref{eq:intervals-action} defines an operad action of $\LL$ on $\map_\mathbf{a}(I^n, \Cnf(\mathcal{C},n))$.
\end{thm}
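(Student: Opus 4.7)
The argument will closely parallel the proof of the preceding proposition for string links, with four tasks: well-definedness of \eqref{eq:intervals-action} as a map into $\map_\mathbf{a}(I^n,\Cnf(\C,n))$, and verification of the identity, symmetry, and associativity axioms. First I would verify that \eqref{eq:intervals-action} gives a continuous map $I^n\to\Cnf(\C,n)$ depending continuously on $(L_1,\ldots,L_k)$ and $(f^1,\ldots,f^k)$. Starting from $(\iota_1(t_1),\ldots,\iota_n(t_n))$, the pairwise disjointness of the interiors of the $L_j$ together with the endpoint condition \eqref{eq:endpoints} forces the input to each $\overline{f}^j$ to satisfy \eqref{eq:boundary-points-are-basepoints}, at which points $\overline{f}^j$ is continuous; the final image lies in $\C=D^2\x I$, and the composite map inherits \eqref{eq:endpoints} and \eqref{eq:support} from the constituent $f^j$.

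Next I would check the three operad axioms. For identity, setting $k=1$ and $L_1=I$ collapses \eqref{eq:intervals-action} to $\overline{f}(\iota_1(t_1),\ldots,\iota_n(t_n))=f(t_1,\ldots,t_n)$ using $p\circ\iota_i=\id_\R$. For symmetry, the key point is that $\overline{f}^j_i$ is the identity unless $p(x_i)\in I$, so by the disjointness of the $L_j$-interiors the $k$ conjugated factors in \eqref{eq:intervals-action} act nontrivially only on pairwise disjoint cylindrical slabs of $(D^2\x\R)^n$, making the composition invariant under simultaneous permutation of the $L_j$ and the $f^j$. For associativity, I would exploit the fact that conjugation by $(L\circ L')\x\id_{D^2}$ factors as the composition of conjugations by $L'\x\id_{D^2}$ and then $L\x\id_{D^2}$, together with $\overline{f}$ being the identity outside $D^2\x I$; the diagram chase verifying commutativity of \eqref{eq:associativity} is then formally identical to the one in the string link proposition and contains no new ideas, though it is notationally heavier because of the $n$-fold products $(\cdot)^n$ and the additional arguments to $\overline{f}$.

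The step I expect to be the main obstacle is establishing that the intermediate $n$-tuples in \eqref{eq:intervals-action} remain in the configuration space $\Cnf(D^2\x\R,n)$ at every stage of the composition, not merely at the end. Each coordinate $\overline{f}^j_i$ either returns a value coming from $f^j$ (when $p(x_i)\in I$) or leaves $x_i$ fixed, so in the mixed case one must rule out collisions between a coordinate just pushed through $f^j$, which by \eqref{eq:endpoints} lies near the basepoint $(a_i,0)$ or $(a_i,1)$, and a coordinate currently sitting outside $D^2\x I$. The basepoint conditions \eqref{eq:endpoints} together with the pairwise disjointness of the $L_j$-interiors resolve this, but the distinctness analysis is the genuinely new geometric input specific to our mapping-space setting, with no direct analog in either the loop space action of \cite{May:1972} or in the string link proposition above.
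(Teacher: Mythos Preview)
Your proposal tracks the paper's proof in both structure and content: the four-part verification (continuity and well-definedness, identity, symmetry, associativity) and the observations invoked in each step are essentially those of the paper, including the appeal to \eqref{eq:boundary-points-are-basepoints} for continuity and the conjugation identity for associativity.

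Your final paragraph, however, singles out a concern the paper does not raise and frames it inaccurately. Intermediate $n$-tuples need not lie in $\Cnf(D^2\times\R,n)$ for the composition \eqref{eq:intervals-action} to make sense, since each $\overline{f}^j$ has domain $(D^2\times\R)^n$; only the \emph{final} output must be a configuration, and the paper simply asserts this is clear. Your analysis of the mixed case is also off: a coordinate with $p(x_i)$ in the interior of $I$ that is ``pushed through $f^j$'' can land anywhere in $\C$, not near a basepoint---condition \eqref{eq:endpoints} constrains $f^j_i$ only when $t_i\in\partial I$. If you want to justify the final-configuration claim that the paper omits, the correct bookkeeping is by heights: after the $j$th conjugated factor, a coordinate with $t_i\in L_j$ has $p$-value in $L_j$ while all others are unchanged, so coordinates active in distinct $L_j$'s are separated by disjointness of the $L_j$, and coordinates active in the same $L_j$ are distinct because $f^j$ lands in $\Cnf(\C,n)$.
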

\begin{proof}
We first claim that the formula defines a continuous map from $I^n$ to $\Cnf(\C,n)$.  In fact, by the disjointness of the interiors of the $L_j$ and conditions \eqref{eq:endpoints} and \eqref{eq:support}, the input $(x_1,...,x_n)$ into an $\overline{f}^j$ at any stage in the composition will satisfy condition \eqref{eq:boundary-points-are-basepoints}.
Clearly the image of $(L_1,...,L_k) \cdot (f^1,...,f^k)$ lies in $\Cnf(\C,n) \subset \Cnf(D^2 \x \R,n)$, and 
$(L_1,...,L_k) \cdot (f^1,...,f^k)$ satisfies the conditions \eqref{eq:endpoints} and \eqref{eq:support} because the $f^j$ do.  The fact that this map varies continuously as the $L_j$ and $f^j$ vary is clear from the formula.

We now check the three conditions in the definition of an operad action.
First, for the identity condition, let $k=1$, let $L_1=I$ (or $\mathrm{id}_\R$, if we view it as a map), and let $f=f^1$ be any map in $\map_\mathbf{a}(I^n, \Cnf(\mathcal{C},n))$.  Then formula \eqref{eq:intervals-action} reduces to 
$(I \cdot f) (t_1,..,t_n) = \overline{f} (\iota_1(t_1),...,\iota_n(t_n))$.  Thus for any component $i$
\begin{equation*}
(I \cdot f)_i (t_1,..,t_n) 
= \overline{f}_i (\iota_1(t_1),...,\iota_n(t_n)) 
=\overline{f}_i (p \circ \iota_1 (t_1),..., p \circ \iota_n(t_n)) 
=f_i(t_1,...,t_n)
\end{equation*}
showing that $I \cdot f = f$, as desired.

For the symmetry condition, note that if one acts by the symmetric group $\Sigma_k$ on both the $L_j$ and the $f^j$, the effect on formula \eqref{eq:intervals-action} is to merely permute the $k$ terms in the composition.  But the $L_j$ have disjoint interiors, and the $i$th component of $\overline{f}^j$ acts as the identity on those $x_i$ with $p(x_i) \notin (0,1)$ (where we use condition \eqref{eq:endpoints} for the case $p(x_i)=0$ or 1).  So reordering these $k$ terms has no effect on the composite map.

For associativity, we need to check commutativity of the square \eqref{eq:associativity} with $X=$\\ $\map_\mathbf{a}(I^n, \Cnf(\mathcal{C},n))$.  
But this essentially amounts to the fact that for a map ${f}$ and two little intervals, say $L_m, L_k$, 
\[
((L_m  \circ L_k) \x \id_{D^2}) \circ \overline{f} \circ ((L_m  \circ L_k)^{-1} \x \id_{D^2}) =
(L_m \x \id_{D^2}) \circ (L_k \x \id_{D^2}) \circ \overline{f} 
\circ (L_k^{-1} \x \id_{D^2})
\circ (L_m^{-1} \x \id_{D^2}),
\]
together with the fact that $\overline{f}$ acts by the identity outside of $D^2 \x I$.
\end{proof}

A little intervals operad action on any space $X$ induces a multiplication on $X$ which is associative up to homotopy and has an identity up to homotopy.  In fact, taking $k=2$ and, say, $L_1=[0, \frac{1}{2}] \; (\leftrightarrow l)$ and $L_2=[\frac{1}{2},1] \; (\leftrightarrow u)$ reduces \eqref{eq:operad-action-maps} to a map $X \x X \to X$.  (Associativity and identity are guaranteed by the conditions required for an operad action.)
One can verify that for $k=2, L_1=l, L_2=u$, the expression \eqref{eq:intervals-action} reduces to the product in \eqref{eq:map-prod}.
Thus the product defined in \eqref{eq:map-prod} makes $\map_\mathbf{a}(I^n, \Cnf(\mathcal{C},n))$ into a monoid (with identity), up to homotopy.  

\begin{cor}\label{prop:moniod-str} 
	The product \eqref{eq:map-prod} makes $(\mathcal{M}(n), \cdot)$ into a monoid (with identity).
	\qed
\end{cor}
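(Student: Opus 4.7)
The plan is to deduce the corollary directly from the little intervals operad action on $\map_\mathbf{a}(I^n, \Cnf(\mathcal{C},n))$ established in the preceding theorem, by observing that passing to $\pi_0$ promotes homotopy-associativity and homotopy-identity to their strict analogues.

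First, I would verify that the binary operation on $\mathcal{M}(n)$ induced by \eqref{eq:map-prod} is well defined on path components. This is immediate from the continuity of the operad action map $\LL(2) \times \map_\mathbf{a}(I^n, \Cnf(\mathcal{C},n))^2 \to \map_\mathbf{a}(I^n, \Cnf(\mathcal{C},n))$ noted in the proof of the preceding theorem: if $f \simeq f'$ via a path $H^f$ and $g \simeq g'$ via a path $H^g$, then the path $t \mapsto (l,u) \cdot (H^f_t, H^g_t)$ in $\map_\mathbf{a}(I^n, \Cnf(\mathcal{C},n))$ connects $f \ast g$ to $f' \ast g'$, giving $[f \ast g] = [f' \ast g']$ in $\mathcal{M}(n)$.

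Next, associativity on $\pi_0$ follows from the operadic associativity condition \eqref{eq:associativity} applied to specific triples of intervals. Concretely, $(f \ast g) \ast h$ and $f \ast (g \ast h)$ are both of the form $(L_1, L_2, L_3) \cdot (f, g, h)$ for two different choices of a triple in $\LL(3)$ (differing only in the partition point), and any two configurations in $\LL(3)$ with the same ordering of intervals lie in the same path component of $\LL(3)$. Combined with continuity of the action, this yields a homotopy between the two products, and hence equality in $\mathcal{M}(n)$. For the identity, take the constant map $e = (e_1,\ldots,e_n)$ with $e_i(t_1,\ldots,t_n) = (a_i, t_i)$, which is clearly in $\map_\mathbf{a}(I^n, \Cnf(\mathcal{C},n))$ (it is in fact $\check{\kappa}$ of the trivial string link). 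Applying the identity axiom of the operad action to the interval $I \in \LL(1)$ and using the path-connectedness of $\LL(2)$ between $(l,u)$ and configurations in which one interval degenerates to an endpoint, one obtains $e \ast f \simeq f \simeq f \ast e$, so $[e]$ is a two-sided identity in $\mathcal{M}(n)$.

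The only mildly delicate point is the unit axiom, since the operad action strictly produces an identity only for the single interval $I \in \LL(1)$, while \eqref{eq:map-prod} corresponds to the pair $(l,u) \in \LL(2)$. The standard resolution --- which is the obstacle I would focus on writing carefully --- is to exhibit a path in $\LL(2)$ from $(l,u)$ to a degenerate configuration on which the action recovers the identity up to rescaling, and then to invoke continuity of the action. Once this is settled, both associativity and unitality descend to strict statements on $\pi_0$, completing the proof that $(\mathcal{M}(n), \ast)$ is a monoid.
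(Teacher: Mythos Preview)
Your approach matches the paper's: the corollary is stated with a bare \qed, the justification being the paragraph immediately preceding it, which says that an action of the little intervals operad yields a multiplication that is associative and unital up to homotopy, and hence strictly so on $\pi_0$. Your expansion of this into well-definedness, associativity via path-connectedness of $\LL(3)$, and a unit argument is exactly the standard unpacking.

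One caution on the unit argument. Your proposed route---deforming $(l,u)$ in $\LL(2)$ to a configuration where one interval ``degenerates to an endpoint''---does not quite work with the formula \eqref{eq:intervals-action} as written, because the paper interprets each $L_j$ as an affine-linear map and uses its inverse $L_j^{-1}$; for a degenerate (length-zero) interval this inverse is undefined. The clean argument instead uses $\LL(0)$ (the empty configuration, a single point), whose action via \eqref{eq:intervals-action} with $k=0$ produces exactly your map $e$ with $e_i(t_1,\ldots,t_n)=(a_i,t_i)$. Then the operadic associativity square \eqref{eq:associativity} applied to $\LL(2)\times \LL(1)\times \LL(0)$ gives $(l,u)\cdot(f,e)= l\cdot f$, and contractibility of $\LL(1)$ yields $l\cdot f \simeq I\cdot f = f$; similarly for $e\ast f$. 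Since you already flagged the unit as the delicate step to be written out carefully, this is a minor correction rather than a gap in strategy.
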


\begin{rem}
	Our little intervals action (and induced multiplication) on the mapping space is similar to the one defined in \cite[Section 4.2]{Budney-Conant-Koytcheff-Sinha} on the Taylor tower $T_n \mathrm{Emb}(I, I^3)$ for the space of long knots $ \mathrm{Emb}(I, I^3)$.  That paper uses a model $T_n \mathrm{Emb}(I, I^3)$, which is a certain space of maps from an $n$-simplex to a configuration space of $n$ points in $\R^3$.  This model closely resembles the targets of $\kappa$ and $\check{\kappa}$.  All of these little intervals actions are also similar to the one defined by Budney in \cite{Budney:2007} on the space of knots itself, and our formula \eqref{eq:intervals-action} bears a close resemblance to Budney's definition.
\end{rem}


\begin{lem}\label{prop:kappa-homo} 
	The map $\check{\kappa}$ induces a homomorphism of monoids $\mathcal{H}(n)\longrightarrow\mathcal{M}(n)$.
\end{lem}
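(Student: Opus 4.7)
The plan is to show that $\check{\kappa}$ intertwines the two little intervals operad actions defined in this section. Since the monoid products \eqref{eq:s-link-prod} and \eqref{eq:map-prod} are both recovered by specializing the operad action to $k=2$, $L_1 = l = [0,\tfrac{1}{2}]$, $L_2 = u = [\tfrac{1}{2},1]$, operad equivariance will immediately give $\check{\kappa}(\sigma \ast \sigma') = \check{\kappa}(\sigma) \ast \check{\kappa}(\sigma')$ at the level of function spaces, and hence at the level of $\pi_0$, establishing that $\check{\kappa}$ is a monoid homomorphism $\mathcal{H}(n) \to \mathcal{M}(n)$.

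The key observation for equivariance is that for any string link $\sigma$, the map $\check{\kappa}(\sigma) = \Cnf(\sigma,n)|_{I^n}$ has $i$th coordinate $\check{\kappa}(\sigma)_i(t_1,\ldots,t_n) = \sigma_i(t_i)$, depending only on $t_i$. Consequently, the extended map $\overline{\check{\kappa}(\sigma)}$ used in \eqref{eq:intervals-action} satisfies $\overline{\check{\kappa}(\sigma)}_i(x_1,\ldots,x_n) = \overline{\sigma}_i(x_i)$, since both cases in the definition of $\overline{f}_i$ depend only on $p(x_i)$. Because the pre- and post-factors $(L_j \x \id_{D^2})^n$ and $(L_j^{-1} \x \id_{D^2})^n$ in \eqref{eq:intervals-action} act diagonally on the $n$ coordinates, substituting $f^j = \check{\kappa}(\sigma^j)$ makes the $i$th coordinate decouple from the others at every stage of the iterated composition. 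The net effect is that the $i$th coordinate of $(L_1,\ldots,L_k) \cdot (\check{\kappa}(\sigma^1),\ldots,\check{\kappa}(\sigma^k))$ evaluated at $(t_1,\ldots,t_n)$ reduces to precisely the expression in \eqref{eq:sigma-intervals-action} applied to $t_i$, namely the $i$th coordinate of $\check{\kappa}\bigl((L_1,\ldots,L_k)\cdot(\sigma^1,\ldots,\sigma^k)\bigr)$ at $(t_1,\ldots,t_n)$.

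The main obstacle is essentially notational bookkeeping: one needs to verify that the coordinate-decoupling property is preserved after iterating the pre- and post-compositions with the $L_j$'s. This is routine because each $(L_j \x \id_{D^2})^n$ is a product of $n$ copies of $L_j \x \id_{D^2}$ acting on separate factors, so it preserves the property that the $i$th coordinate of the output depends only on the $i$th coordinate of the input. Once this decoupling is verified, the lemma follows by directly matching the $i$th coordinate terms of formulas \eqref{eq:sigma-intervals-action} and \eqref{eq:intervals-action}, and then specializing to $k=2$ with $L_1 = l$, $L_2 = u$.
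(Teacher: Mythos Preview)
Your proposal is correct and follows exactly the same approach as the paper: the paper's proof simply asserts that comparing formulas \eqref{eq:sigma-intervals-action} and \eqref{eq:intervals-action} shows $\check{\kappa}$ is compatible with the two little intervals actions, hence with the induced products. You have supplied the details of that comparison---namely that $\overline{\check{\kappa}(\sigma)}_i(x_1,\ldots,x_n)=\overline{\sigma}_i(x_i)$ and that the diagonal action of $(L_j\times\id_{D^2})^n$ preserves this coordinate decoupling---which is precisely what the paper leaves implicit.
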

\begin{proof}
	The proof follows from the comparison of the two formulas for the little intervals actions:
	one for string links given in 
	\eqref{eq:sigma-intervals-action}
	and one for maps given in 
	\eqref{eq:intervals-action}.  
	This comparison shows that $\check{\kappa}$ is compatible with the two little intervals actions.  In particular, it is compatible with the induced product.
\end{proof}

\begin{conj}
	$(\mathcal{M}(n), \cdot)$ is a group, with the inverse $f^{-1}$ to a map $f$ given by $(r \x \id_{D^2}) \circ f^{-1} \circ (r)^n (t_1,...,t_n) = f(1-t_1,...,1-t_n)$, where $r: I \to I$ is the reflection given by $r(t)=1-t$.
\end{conj}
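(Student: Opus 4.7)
The plan is to exhibit, for each $f \in \map_\mathbf{a}(I^n, \Cnf(\mathcal{C}, n))$, a continuous two-sided homotopy inverse in the monoid $\mathcal{M}(n)$ using the conjecture's formula. Writing $R: \mathcal{C} \to \mathcal{C}$ for the reflection $(x, t) \mapsto (x, 1-t)$, the formula unpacks to $f^{-1}(t_1, \ldots, t_n) = R^n f(1 - t_1, \ldots, 1-t_n)$, with $R^n$ acting coordinate-wise on $\Cnf(\mathcal{C}, n)$. One verifies that $f^{-1}$ satisfies \eqref{eq:endpoints} and \eqref{eq:support} using that $R$ swaps $(a_i, 0) \leftrightarrow (a_i, 1)$ and is a homeomorphism preserving interiors, and that $f^{\mathrm{id}}(t_1, \ldots, t_n) := ((a_1, t_1), \ldots, (a_n, t_n))$ represents the monoid unit, satisfying $f^{\mathrm{id}} \ast g \simeq g \simeq g \ast f^{\mathrm{id}}$ by a standard reparametrization argument analogous to the loop-space identity $\ast \cdot \gamma \simeq \gamma$.

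The main step is to construct a continuous homotopy $H: [0, 1] \x I^n \to \Cnf(\mathcal{C}, n)$ in $\map_\mathbf{a}(I^n, \Cnf(\mathcal{C}, n))$ with $H(1, \cdot) = f \ast f^{-1}$ and $H(0, \cdot) = f^{\mathrm{id}}$, modeled after the standard loop-space proof that $\gamma \ast \gamma^{-1} \simeq \ast$. Unpacking \eqref{eq:map-prod}, one sees that in the $i$th component, $(f \ast f^{-1})_i$ has its $I$-coordinate monotonically traversing $[0, 1]$ through $1/2$, while its $D^2$-coordinate executes a round-trip $a_i \to \beta_i(1, \ldots) \to a_i$ as $t_i$ varies over $[0, 1]$; this round-trip structure is exactly what the reflection symmetry $f^{-1}_i(t) = R f_i(1-t)$ enforces. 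Define $H_s$ to perform a depth-$s$ round-trip on the shrinking annular region $\{(1-s)/2 \le t_i \le (1+s)/2\}$ --- rescaled $f_i$ on the lower half and rescaled $f^{-1}_i$ on the upper --- and to equal $(a_i, t_i)$ outside; the pieces are coupled via the multi-dimensional subcube rescaling $\mathbf{z}^{-1}$ so as to reproduce $f \ast f^{-1}$ on the nose at $s = 1$ and $f^{\mathrm{id}}$ at $s = 0$. Continuity at the moving boundaries $\{t_i = (1 \pm s)/2\}$ follows from the endpoint conditions on $f$ and $f^{-1}$. A symmetric construction yields $f^{-1} \ast f \simeq f^{\mathrm{id}}$, so every element of $\mathcal{M}(n)$ admits a two-sided inverse.

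The main obstacle is verifying configuration-valuedness of $H_s$ (no collisions among the $n$ points of $H_s(t)$) throughout $s \in [0, 1]$. Here the reflection symmetry is essential: the outgoing $D^2$-leg traced by $f_i$ is retraced exactly by $f^{-1}_i$, so the configurations visited during the outgoing leg coincide as sets with those visited during the return leg, and the disjointness available for $f$ and $f^{\mathrm{id}}$ propagates through the retraction. A secondary subtlety is the consistent coupling of all $n$ coordinates through the subcube rescaling $\mathbf{z}^{-1}$ in the piecewise definition of $H_s$: the simpler operad-theoretic candidate $\mathbf{L}(s) \cdot (f, f^{-1})$ with $\mathbf{L}(s) \in \mathcal{L}(2)$ shrinking to coincident points $(\{1/2\}, \{1/2\})$ fails continuity at $s = 0$, because the rescaled $f$ develops non-identity $D^2$-spikes near $t_i = 1/2$; a correct homotopy must explicitly fill the vacated regions with $(a_i, t_i)$ and match carefully along the moving inner boundaries using the endpoint conditions \eqref{eq:endpoints}.
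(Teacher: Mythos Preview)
The statement you are addressing is listed in the paper as a \emph{conjecture}, not a theorem; the authors give no proof and explicitly remark that the analogous argument for the Taylor tower of long knots ``does not immediately extend to the setting of string links.''  So there is no paper proof to compare against, and the relevant question is whether your sketch actually closes the gap the authors left open.

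It does not.  Your homotopy $H_s$ is modeled on the $1$-dimensional nullhomotopy $\gamma\ast\gamma^{-1}\simeq *$, applied ``coordinate by coordinate.''  But the product \eqref{eq:map-prod} is \emph{not} a coordinate-wise concatenation: the $i$th component $(f\ast f^{-1})_i$ depends on all of $t_1,\dots,t_n$ through the subcube rescaling $\mathbf{z}^{-1}$, and in the mixed subcubes (those where some $t_j\le \tfrac12$ and others $t_j>\tfrac12$) different components draw their values from $f$ and $f^{-1}$ evaluated at \emph{unrelated} points of $I^n$.  Your description ``depth-$s$ round-trip on $\{(1-s)/2\le t_i\le (1+s)/2\}$, equal to $(a_i,t_i)$ outside, coupled via $\mathbf{z}^{-1}$'' does not specify what the $j$th component sees in its $i$th argument when $t_i$ lies outside the active band but $t_j$ lies inside; the phrase ``coupled via $\mathbf{z}^{-1}$'' is not a definition here, since $\mathbf{z}^{-1}$ in the original product is tied to the fixed halving of $I$, not to the $s$-dependent bands.

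Even granting some specific choice, your justification for configuration-valuedness does not go through.  You argue that reflection symmetry forces the outgoing and return legs of $(f\ast f^{-1})_i$ to trace the same set of values, so ``the disjointness available for $f$ and $f^{\mathrm{id}}$ propagates through the retraction.''  But the relevant condition is pointwise: for each fixed $(s,t)$ one needs $(H_s)_i(t)\neq (H_s)_j(t)$.  In the regime where $t_i$ is outside its active band (so $(H_s)_i(t)=(a_i,t_i)$) while $t_j$ is inside its band (so $(H_s)_j(t)$ is some value of $f_j$ or $f^{-1}_j$), nothing in the hypotheses prevents $f_j$ from hitting the point $(a_i,t_i)\in\mathcal{C}$; neither \eqref{eq:endpoints} nor \eqref{eq:support} constrains the $D^2$-coordinate of $f_j$ away from $a_i$.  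The reflection-symmetry observation controls only the ``diagonal'' subcubes and says nothing about these mixed regions.  This is exactly the kind of coupling that makes the knot-tower argument in \cite{Budney-Conant-Koytcheff-Sinha} delicate, and your proposal does not supply the missing idea.
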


\begin{rem}
	Each stage of the Taylor tower for the space of knots is a group, even though isotopy classes of knots form only a monoid.  This suggests our conjecture above, with our conjectured inverse formula resembling the formula for inverses of braids (and hence homotopy string links).  However, the proof of the analogous fact for knots is somewhat technical \cite[Section 5, esp. Theorem 5.13]{Budney-Conant-Koytcheff-Sinha} and does not immediately extend to the setting of string links.
\end{rem}

\section{Proof of the Main Theorem}\label{sec:proof of main theorem}
\no First, let us define an analog of the projection $\delta_i$ from~\eqref{eq:H(n)-short-exact} for the monoid $\mathcal{M}(n)$. For any $i=1,\ldots,n$, we have the obvious projection 
\begin{align*}
p_i\co (\Cnf(\mathcal{C},n),\mathbf{a})& \longrightarrow (\Cnf(\mathcal{C},n-1),\mathbf{a}_{\widehat{i}})\\
p_i(x_1,\ldots x_n) & =(x_1,\ldots,\widehat{x}_i,\ldots, x_n),
\end{align*}
defined by ``forgetting'' the $i$th coordinate; in particular, the basepoint  $\mathbf{a}=(a_1,\ldots,a_n)$ is mapped to $\mathbf{a}_{\widehat{i}}=(a_1,\ldots,\widehat{a_i},\ldots,a_n)$. Given any $f\in \map_\mathbf{a}(I^n,\Cnf(\mathcal{C},n))$,
we obtain 
$p_i\circ f\co I^n\to \Cnf(\mathcal{C},n-1)$.
We further restrict $p_i\circ f=p_i\circ f(t_1,\dots,t_n)$ to the $i$-th face $I(i)$ of $I^n$, i.e. $I(i)=I_1\times \ldots \times I_{i-1}\times \{0\}\times I_{i+1}\times \ldots\times I_n$.  That is, we precompose by the inclusion $j_i\co I(i)\longmapsto I^n$ to get a map
\[
\widetilde{\psi}_i(f)=p_i\circ f\circ j_i\co I^{n-1}\longrightarrow\Cnf(\mathcal{C},n-1)
\]
satisfying properties \eqref{eq:endpoints}--\eqref{eq:support} with the basepoint $\mathbf{a}_{\widehat{i}}$. The above yields a well defined map on function spaces\footnote{In the language of Taylor towers, this is the map from the ${(1,1,...,1)}$-stage of the $n$-variable Taylor tower for $\slink_\mathbf{a}(\sqcup^n_{i=1} I_i, \C)$ to the ${(1,1,...,1)}$-stage of the $(n-1)$-variable Taylor tower for $\slink_\mathbf{a_{\widehat{i}}}(\sqcup^{n-1}_{i=1} I_i, \C)$ obtained by forgetting the $i$th variable.  We discuss the Taylor tower perspective in Section \ref{sec:Taylor-tower}.} 
\[
\widetilde{\psi}_i\co \map_\mathbf{a}(I^n,\Cnf(\mathcal{C},n))\longrightarrow \map_{\mathbf{a}_{\widehat{i}}}(I^{n-1},\Cnf(\mathcal{C},n-1)).
\]
Let
\begin{equation*}
\psi_i:=\pi_0(\widetilde{\psi}_i)\co \mathcal{M}(n)\longrightarrow \mathcal{M}_i(n-1),
\end{equation*}
where $\mathcal{M}_i(n-1)$ denotes $\pi_0(\map_{\mathbf{a}_{\widehat{i}}}(I^{n-1},\Cnf(\mathcal{C},n-1)))$. It can be shown that $\psi_i$ is a monoid homomorphism, but we have not used this fact in the following argument, and thus we leave the proof to the reader. An analog of the map $\delta$ from \eqref{eq:delta} for $\mathcal{M}(n)$ can be now defined as
\begin{equation*}
\psi:=\psi_1\times\ldots\times \psi_n \co \mathcal{M}(n)  \longrightarrow \prod^{n}_{i=1} \mathcal{M}_i(n-1).
\end{equation*}
\no Consider the following diagram
\begin{equation}\label{diag:check-kappa}
\begin{tikzpicture}[baseline=(current bounding box.center),description/.style={fill=white,inner sep=2pt}]
\matrix (m) [matrix of math nodes, row sep=3em,
column sep=2.5em, text height=1.5ex, text depth=0.25ex]
{ Br\mathcal{H}(n)  & & Br\mathcal{M}(n)\\
	\mathcal{H}(n) & & \mathcal{M}(n)\\ 
	\prod^n_{i=1} \mathcal{H}_i(n-1) & &  \prod^n_{i=1} \mathcal{M}_i(n-1) \\};
\path[->,font=\scriptsize]

(m-1-1) edge node[auto] {$\check{\kappa}\bigl|_{Br\mathcal{H}(n)}$} (m-1-3)
(m-2-1) edge node[auto] {$\check{\kappa}$} (m-2-3)
(m-3-1) edge node[auto] {$(\,\check{\kappa}\,)^n$} (m-3-3)

(m-1-1) edge node[auto] {$\iota$} (m-2-1)
(m-2-1) edge node[auto] {$\delta$} (m-3-1)

(m-1-3) edge node[auto] {$\jmath$} (m-2-3)
(m-2-3) edge node[auto] {$\psi$} (m-3-3);
\end{tikzpicture}
\end{equation} 
where $Br\mathcal{M}(n)$ denotes the image of $Br\mathcal{H}(n)$ in $\mathcal{M}(n)$ under $\check{\kappa}$, and $\iota$ and $\jmath$ are inclusion monomorphisms. The fact that diagram \eqref{diag:check-kappa} commutes follows directly from the definitions of the maps involved; in fact, it is derived as $\pi_0$ of a commuting diagram of maps on the respective function spaces. By Lemma 
\ref{lem:check-kappa-on-BH(n)}, the top map $\check{\kappa}\bigl|_{Br\mathcal{H}(n)}$ in \eqref{diag:check-kappa} is an isomorphism. 

Let us reason inductively  with respect to $n$. All 2-component string links are Borromean, so ${\check{\kappa}=\jmath \circ \check{\kappa}\bigl|_{Br\mathcal{H}(2)}}$ is a monomorphism by Lemma~\ref{lem:check-kappa-on-BH(n)}. 

For $n > 2$, assume that $\check{\kappa}$ is a monomorphism on $\mathcal{H}(n-1)$. Then, after an appropriate choice of basepoints, we can conclude that the bottom map $(\check{\kappa})^n$ is also a monomorphism. Suppose $x, y\in \mathcal{H}(n)$ so that $\check{\kappa}(x)=\check{\kappa}(y)$. Since $\check{\kappa}$ is a monoid homomorphism, we obtain
\[
\check{\kappa}(\mathbf{1})=\check{\kappa}(x\cdot x^{-1})=\check{\kappa}(x)\cdot \check{\kappa}(x^{-1})=\check{\kappa}(y)\cdot \check{\kappa}(x^{-1})=\check{\kappa}(y\cdot x^{-1}),
\]
where $\mathbf{1}$ denotes the identity of $\mathcal{H}(n)$. From diagram \eqref{diag:check-kappa}, 
\[
(\check{\kappa})^n\circ \delta(\mathbf{1})=\psi\circ \check{\kappa}(\mathbf{1})=\psi\circ \check{\kappa}(y\cdot x^{-1})=(\check{\kappa})^n\circ \delta(y\cdot x^{-1}).
\]
Since $(\check{\kappa})^n$ is a monomorphism, we get 
\[
\mathbf{1}=\delta(\mathbf{1})=\delta(y\cdot x^{-1}),
\]
implying $y\cdot x^{-1}\in \ker(\delta)$ and therefore, by \eqref{eq:BH(n)-def}, $y \cdot x^{-1}\in Br\mathcal{H}(n)$. The composition $\jmath\circ \check{\kappa}\bigl|_{Br\mathcal{H}(n)}$ is a monomorphism, thus $y\cdot x^{-1}=\mathbf{1}$ and $x=y$. Hence, $\check{\kappa}$ is a monomorphism on $\mathcal{H}(n)$.~\hfill$\Box$

\section{The $\kappa$-invariant as a map to the Taylor tower}
\label{sec:Taylor-tower}
The Taylor tower is an important object in the study of manifolds from a homotopy-theoretic perspective.  It is a sequence of spaces which approximate spaces of embeddings and other spaces of maps.  Its origins are in the functor calculus of Goodwillie and Weiss.  In our setting, a multivariable version of the Taylor tower is relevant because the source manifold has multiple connected components, see \cite{Weiss:1996} for an introduction to these ideas.  

The target $\map_{\mathbf{a}}(I^n,\Cnf(\mathcal{C},n))$ of our map $\check{\kappa}$ is homotopy equivalent to the $(1,1,...,1)$-stage of the multivariable Taylor tower for the space of link maps of string links.  The purpose of this Section is to establish this homotopy equivalence.

We will use a definition of the Taylor tower for $\slink_\mathbf{a}(\sqcup^n_{i=1} I_i, \C)$ which involves puncturing the source manifold $\sqcup_{i=1}^n I$.  This idea is due to Goodwillie, and our definition will be analogous to the one used by Sinha for the space of knots in \cite{Sinha:2009}.
Let $I_1,...,I_n$ be copies of $I$.
Given natural numbers $m_1,...,m_n$, fix disjoint closed subintervals $A_{1,1},...,A_{1,m_1} \subset I_1, ..., A_{n,1},...,A_{n,m_n} \subset I_n$ which are disjoint from $\partial I_1,...,\partial I_n$ and which, for convenience, are assumed to be in increasing order in each $I_i$.  
Let $\mathcal{P}_\nu[m]$ denote the category (or poset) of nonempty subsets of $[m]=\{0,1,...,m\}$ with inclusions as morphisms.  
Consider the product category $\mathcal{P}_\nu[m_1] \x ... \x \mathcal{P}_\nu[m_n]$, with objects $S=S_1\x...\x S_n$ and inclusions as morphisms.  
For $S_i \in \mathcal{P}[m_i]$, let 
\[
I(S_i) := I_i \setminus \left( \bigcup_{j\in S_i} A_{i,j} \right),
\]
a punctured interval with punctures determined by $S_i$.
Then define $L_S$ as
\[
L_S:=\slink_{\mathbf{a}} (I(S_1) \x ... \x I(S_n), \mathcal{C}),
\]
that is, the space of smooth maps with prescribed behavior on the boundaries, where for $i\neq j$ the images of the punctured $I_i$ and the punctured $I_j$ are disjoint.

For any functor (i.e. diagram) $F$ from a category $C$ to the category of topological spaces, one can define the homotopy limit of $F$ over $C$ as the following space of natural transformations:
\[
\holim_C F := \mathrm{Nat}(|C\downarrow -|, F).
\] 
Here, for any object $c$ of $C$, $C\downarrow c$ is the category of objects over $c$, and $|-|$ denotes geometric realization.  Thus 
\[
\holim_C F \subset \prod_{c \in C} \mathrm{Map}(|C \downarrow c|, F(c)),
\] 
and the elements of the right-hand side which are in $\holim_C F$ are precisely those sequences of maps which are compatible with respect to morphisms of $C$.
Note that when $C=\P_\nu[m_1] \x ... \x \P_\nu[m_n]$, the realization $|C|$ is a product of $m_i$-dimensional simplices.
Moreover, in this case, $C$ has a final object $[m_1]\x...\x [m_n]$, so $|C \downarrow c| \subset |C|$.

\begin{defin}
	\label{TaylorTowerDef}
	Define the $(m_1,...,m_n)$ stage of the (multivariable) Taylor tower for\\ $\slink_\mathbf{a}(\sqcup^n_{i=1} I_i, \C)$ as
	\[
	T_{(m_1,..,m_n)} \slink_\mathbf{a}(\sqcup^n_{i=1} I_i, \C):=
	\displaystyle \holim\limits_{S \in \mathcal{P}[m_1] \x... \x\mathcal{P}[m_n]} L_S.
	\]
\end{defin}

The direct extension of Weiss's original definition of the Taylor tower \cite{Weiss:1999} to this multivariable setting is a ``larger'' space because it is a homotopy limit over a much larger category of open subsets of the source manifold.  An equivalence between Definition \ref{TaylorTowerDef} and Weiss's definition can be readily established \cite[Proposition 7.1]{Munson-Volic:2012}.  Therefore it suffices to connect our mapping space to the space in Definition \ref{TaylorTowerDef}.

\begin{prop}
\label{prop:taylor}
	$\map_{\mathbf{a}}(I^n,\Cnf(\mathcal{C},n)) \simeq 
	T_{(1,1,...,1)}  \slink_\mathbf{a}(\sqcup^n_{i=1} I_i, \C)$.
\end{prop}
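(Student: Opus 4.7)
My plan is to identify both sides with mapping spaces and reduce the statement to a standard homotopy colimit / homotopy limit duality.

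First, I would reinterpret $L_S = \slink_\mathbf{a}(I(S_1) \times \cdots \times I(S_n), \C)$ as a mapping space of the form $\map^\partial(X_S, \Cnf(\C,n))$, where $X_S := I(S_1) \times \cdots \times I(S_n) = I^n \setminus \mathrm{slabs}_S$ is the open submanifold of $I^n$ obtained by deleting the axis-aligned slabs determined by $S$, and the ``$\partial$'' decoration records the endpoint and support conditions inherited from the string link data. Under this identification, the structure morphism $L_S \to L_{S'}$ for $S \subset S'$ is precomposition with the open inclusion $X_{S'} \hookrightarrow X_S$ of a deeper punctured source into a shallower one.

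Next, I would apply the standard adjunction
\[
\holim_{S \in \mathcal{P}_\nu[1]^n} \map^\partial(X_S, Y) \simeq \map^\partial\bigl(\operatorname{hocolim}_{S} X_S,\ Y\bigr),
\]
with $Y = \Cnf(\C, n)$, converting the holim of mapping spaces into a single mapping space out of a homotopy colimit of sources. The hypotheses required---cofibrancy of the source diagram, fibrancy of the target---hold in our setting because each $X_S$ is an open manifold-with-corners, the diagram morphisms are cofibrations, and $\Cnf(\C,n)$ is a manifold. The homotopy colimit itself then decomposes: because $X_S$ is an external product over $i$ and $\mathcal{P}_\nu[1]^n = \prod_i \mathcal{P}_\nu[1]$, a Fubini identity for cofibrant diagrams gives
\[
\operatorname{hocolim}_{S \in \mathcal{P}_\nu[1]^n} X_S \;\simeq\; \prod_{i=1}^n \operatorname{hocolim}_{S_i \in \mathcal{P}_\nu[1]} I(S_i).
\]
For each $i$, the single-variable diagram is the cospan of open inclusions $I(\{0\}) \hookleftarrow I(\{0,1\}) \hookrightarrow I(\{1\})$; the inclusions are cofibrations, so the homotopy pushout equals the strict pushout $I(\{0\}) \cup I(\{1\}) = I \setminus (A_{i,0} \cap A_{i,1}) = I$, using that $A_{i,0}$ and $A_{i,1}$ are disjoint. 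Assembling the factors gives $\operatorname{hocolim}_S X_S \simeq I^n$, and combining yields
\[
T_{(1,\ldots,1)}\, \slink_\mathbf{a}(\sqcup_i I_i, \C) = \holim_S L_S \simeq \map^\partial(I^n, \Cnf(\C,n)) = \map_\mathbf{a}(I^n, \Cnf(\C,n)).
\]

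The main obstacle is tracking the ``$\partial$'' boundary conditions rigorously through the adjunction. The endpoint and support conditions pin down the maps on certain faces of $\partial I^n$, so the cleanest way to organize the proof is to reformulate everything relative to $\partial I^n$---namely, as spaces of sections of an appropriate bundle over $I^n$ with prescribed values on the boundary---so that the hocolim/holim adjunction is being applied to relative mapping spaces. The Fubini identity for homotopy colimits of product diagrams is standard for cofibrant diagrams, and cofibrancy is automatic here since all inclusions in sight are of open submanifolds with corners; verifying that the strict/homotopy pushout identifications above are compatible with the boundary-relative reformulation is the most delicate bookkeeping step.
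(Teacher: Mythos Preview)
Your first step---the identification $L_S \simeq \map^\partial(X_S,\Cnf(\C,n))$ with $X_S=\prod_i I(S_i)$---does not hold, and the rest of the argument rests on it.  By definition $L_S$ consists of link maps $\sigma=(\sigma_1,\dots,\sigma_n)$ from the \emph{disjoint union} $I(S_1)\sqcup\cdots\sqcup I(S_n)$ to $\C$ with pairwise disjoint images.  Such a $\sigma$ does give an element of $\map^\partial(X_S,\Cnf(\C,n))$ via $(t_1,\dots,t_n)\mapsto(\sigma_1(t_1),\dots,\sigma_n(t_n))$, but this is only the inclusion of the subspace of maps whose $i$th coordinate depends solely on $t_i$.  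That inclusion is exactly a punctured version of the map $\check{\kappa}$ whose injectivity on $\pi_0$ is the main theorem of the paper; it is far from an equivalence in general.  Concretely, for $n=2$ and $S=(\{0,1\},\{0,1\})$ the space $X_S$ has $9$ contractible components, so $\map^\partial(X_S,\Cnf(\C,2))$ retracts to a product with one $\Cnf(\C,2)$ factor and four $\C\setminus\{\mathrm{pt}\}$ factors, whereas $L_S$ retracts (as in the paper) to a single open subset of $\C^2$.  More tellingly, your hocolim computation $\operatorname{hocolim}_S X_S\simeq I^n$ is insensitive to the values of $m_1,\dots,m_n$ (as long as each $m_i\geq1$), so if the identification held your argument would show every stage $T_{(m_1,\dots,m_n)}$ is equivalent to $\map_{\mathbf a}(I^n,\Cnf(\C,n))$, i.e.\ the Taylor tower is constant---which it is not.

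The paper avoids this by never attempting to write $L_S$ as a mapping space into $\Cnf(\C,n)$.  Instead it deformation-retracts each $L_S$ onto the subspace $\overline{L_S}$ of maps constant on each component of $\sqcup_i I(S_i)$, and then unwinds the explicit natural-transformation model of the homotopy limit: since $|(\P_\nu[1])^n|\cong I^n$ and $\overline{L_{\{0,1\}^n}}=\Cnf(\C,n)$, an element of the holim is precisely a map $I^n\to\Cnf(\C,n)$, with the proper faces $T\subsetneq\{0,1\}^n$ enforcing the basepoint conditions \eqref{eq:endpoints}.  The point is that the cube $I^n$ appears here as the \emph{nerve of the indexing category}, not as a homotopy colimit of the punctured sources $X_S$.
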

\begin{proof}
	This equivalence is essentially a matter of replacing the spaces $L_S$ in the definition of the Taylor tower by certain deformation retracts.  For any $S=S_1\x...\x S_n$, consider the homotopy which on each $I(S_i)$ contracts the two endpoint components to the respective endpoints and contracts the remaining components to their midpoints.  By precomposing the link maps in $L_S$ by this homotopy, we obtain for all $S$ a deformation retraction $r_S: L_S \to \overline{L_S}$ onto the subspace of maps which are constant on each component, which we may write as   
	\[
	\overline{L_S} :=
	\slink_\mathbf{a}(\pi_0(I(S_1)) \sqcup ... \sqcup \pi_0(I(S_n)), \C).
	\]
	The maps in $\overline{L_S}$ are fixed on each of the $2n$ endpoint components in $\sqcup_{i=1}^n I(S_i)$, and they also satisfy the disjointness condition that images of points in $\pi_0(S_i)$ are disjoint from images of points in $\pi_0(S_j)$ for all $i \neq j$.
	
	If $T=S\cup \{k\}$, we associate to the inclusion $S \subset T$ the map $\overline{L_S} \to \overline{L_T}$ induced by doubling the appropriate point in the domain, so as to make the $r_S$ fit together into a natural transformation.
	For example, for $n=1$ and the inclusion $\{0\} = S \to T=\{0,1\}$, the map $L_S \to L_T$ is induced by the restriction from $I \setminus A_0$ to $I \setminus (A_0 \cup A_1)$.  Thus we take the map $\ast \cong \overline{L_S} \to \overline{L_T} \cong \mathrm{Map}(\ast, \mathcal{C})$ to be the map induced by doubling the second of the two points in the domain.  In other words, its image is the map which sends the point to the basepoint $(a,1)$.  On the other hand, for $S=\{1\}, T=\{0,1\}$, the image of the map $\overline{L_S} \to \overline{L_T}$ is the map which sends the point to the basepoint $(a,0)$.  Though $n$ may be arbitrary, we only need to consider case where each $S_i \subset \{0,1\}$, so we refrain from further details about these doubling maps in general.
	
	Now the maps $r_S$ on each object $S$, together with the maps $\overline{L_S} \to \overline{L_T}$ for each morphism $S \subset T$, define a natural transformation of diagrams $\{L_S\} \to \{\overline{L_S}\}$.  Since each $r_S$ is a homotopy equivalence,
$T_{(m_1,..,m_n)} \slink_\mathbf{a}(\sqcup^n_{i=1} I_i, \C) \simeq \holim_{S \in \mathcal{P}[m_1] \x... \x\mathcal{P}[m_n]} \overline{L_S}$.
	
	We now specialize to the case $m_1=...=m_n=1$.  We claim that in this case the latter homotopy limit is (nearly) homeomorphic to  $\map_{\mathbf{a}}(I^n,\Cnf(\mathcal{C},n))$.  First note that
$\overline{L_{\{0,1\}^n}}$ is precisely $\Cnf(\mathcal{C},n)$, and that $| (\P_\nu[1])^n |$ is the $n$-cube.  Thus part of the data of an element of $\holim_S \overline{L_S}$ is a map from the $n$-cube to $\Cnf(\mathcal{C},n)$.  Now for any $T\subset S$ the map $\overline{L_T} \to \overline{L_S}$ is an inclusion, and $\{0,1\}^n$ is the final object.  Thus an element of $\holim_S \overline{L_S}$ is completely specified by a map $I^n \to \Cnf(\mathcal{C},n)$, and the role of $ T \subsetneq \{0,1\}^n$ is to specify which such maps are elements of the homotopy limit.  But unwinding the definitions shows that the conditions specified by the various subsets $T$ are precisely the basepoint conditions (\ref{eq:endpoints}).
The support condition (\ref{eq:support}) is not automatically satisfied by elements of $\holim_S \overline{L_S}$, but one can construct a deformation retraction onto a subspace of elements which do satisfy it.
\end{proof}

We may now combine Proposition \ref{prop:taylor} with our main theorem:
\begin{cor}
The map from the space of link maps of string links to the $(1,1,...,1)$-stage of its Taylor tower is injective on path-components.
\end{cor}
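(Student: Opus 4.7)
The plan is to combine the main theorem with Proposition \ref{prop:taylor}, by checking that the natural map from the space of link maps of string links into the $(1,1,\ldots,1)$-stage of the Taylor tower corresponds, under the homotopy equivalence of Proposition \ref{prop:taylor}, to the map $\sigma \mapsto \Cnf(\sigma,n)\bigl|_{I^n}$ defining $\check{\kappa}$ on the level of function spaces.

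First, I would set up the comparison. The natural map $\slink_{\mathbf{a}}(\sqcup_{i=1}^n I_i, \C) \to T_{(1,1,\ldots,1)} \slink_{\mathbf{a}}(\sqcup_{i=1}^n I_i, \C)$ is defined by restriction: a string link $\sigma$ restricts to each $L_S = \slink_{\mathbf{a}}(I(S_1) \times \cdots \times I(S_n), \C)$ by restricting each strand $\sigma_i$ to the punctured interval $I(S_i)$, and these restrictions are compatible with inclusions $S \subset T$, hence define an element of the homotopy limit. Next, compose with the deformation retractions $r_S \colon L_S \to \overline{L_S}$ from the proof of Proposition \ref{prop:taylor}, which contract each connected component of $I(S_i)$ to a point (endpoints or midpoint). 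Under the identification $\holim_S \overline{L_S} \simeq \map_{\mathbf{a}}(I^n, \Cnf(\C,n))$ obtained by reading off the data indexed by the final object $\{0,1\}^n$ (i.e., the map $I^n \to \overline{L_{\{0,1\}^n}} = \Cnf(\C,n)$), the composite assigns to $\sigma$ the map $(t_1,\ldots,t_n) \mapsto (\sigma_1(t_1),\ldots,\sigma_n(t_n)) = \Cnf(\sigma,n)(t_1,\ldots,t_n)$. This is exactly the map underlying $\check{\kappa}$. The basepoint conditions \eqref{eq:endpoints} arise, as in the proof of Proposition \ref{prop:taylor}, from the requirement that the maps $I^n \to \Cnf(\C,n)$ be compatible with the doubling maps $\overline{L_T} \to \overline{L_{\{0,1\}^n}}$ indexed by proper subsets $T$.

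Having established this compatibility on the function-space level, pass to $\pi_0$. The homotopy equivalence of Proposition \ref{prop:taylor} induces a bijection on path components, so the map $\mathcal{H}(n) \to \pi_0(T_{(1,1,\ldots,1)}\slink_{\mathbf{a}}(\sqcup_{i=1}^n I_i, \C))$ induced by the natural map to the Taylor tower is identified with $\check{\kappa} \colon \mathcal{H}(n) \to \mathcal{M}(n)$. The main theorem asserts that $\check{\kappa}$ is injective, and therefore so is the Taylor-tower map on path components.

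The only step that requires actual verification (rather than invocation) is the compatibility check in the first paragraph; the rest is formal. I expect this step to be straightforward but slightly fiddly: one must track how the retractions $r_S$ interact with the doubling maps indexed by morphisms of $\P_\nu[1]^n$, and confirm that the basepoint-at-endpoints conventions match the endpoint condition \eqref{eq:sigma-endpoints} for string links. No genuinely new ideas beyond those in Proposition \ref{prop:taylor} and the main theorem should be needed.
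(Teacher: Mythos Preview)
Your proposal is correct and in fact more detailed than the paper's own treatment, which simply states the corollary as an immediate consequence of combining Proposition~\ref{prop:taylor} with the main theorem, without spelling out the compatibility between the natural Taylor-tower map and $\check{\kappa}$.

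One small caveat on the compatibility check: the composite you describe does not literally produce the map $(t_1,\ldots,t_n)\mapsto(\sigma_1(t_1),\ldots,\sigma_n(t_n))$. The cube $I^n$ appearing in the holim is the realization $|(\P_\nu[1])^n|$, not the original product $I_1\times\cdots\times I_n$, and the retractions $r_S$ evaluate each strand at fixed midpoints rather than at variable parameters; moreover the $r_S$ only commute with the restriction/doubling maps up to homotopy, not on the nose. So the image of $\sigma$ under the composite is only homotopic, not equal, to $\Cnf(\sigma,n)|_{I^n}$. Since you only need agreement on $\pi_0$, this is harmless, and your expectation that the verification is ``straightforward but slightly fiddly'' is exactly right.
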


\appendix
\section{About the basepoints}\label{app:basepoints}

In Section \ref{sec:kappa} we considered pointed homotopy classes in the domain and codomain of the $\kappa$-invariant \eqref{eq:kappa}. Here, we intend to show that the basepoint-free version of $\kappa$ is equivalent to the pointed one.

Let $s_i \in S^1$ for each $i=1,\ldots , n$ and let $\link(\sqcup^{n}_{i=1} S^1; \R^3)$ denote the function space of link maps $L: (S^1,s_1)\sqcup\ldots \sqcup (S^1,s_n)  \longrightarrow \R^3$ (defined in \eqref{eq:link-map}) without the basepoint condition $L_i(s_i)=a_i$. With $\mathbf{s}=(s_i)_i$ fixed we have a well defined evaluation map
\[
ev_{\mathbf{s}}\co \link(\sqcup^{n}_{i=1} S^1; \R^3)\longmapsto \Cnf(n).
\]
By standard results---see e.g.  \cite{Arkowitz:2011}---$ev_{\mathbf{s}}$ is a fibration, with fiber $\link_{\mathbf{a}}(\sqcup^{n}_{i=1} S^1; \R^3)$. The map induced from fiber inclusion in the long exact sequence of the fibration 
\[
j_\ast:\pi_0(\link_{\mathbf{a}}(\sqcup^{n}_{i=1} S^1; \R^3))\longrightarrow \pi_0(\link(\sqcup^{n}_{i=1} S^1; \R^3)),
\]
can be shown to be surjective.\footnote{One may easily change the basepoint of a link after an isotopy.}  The map $j_\ast$ is also injective since $j_\ast(L)=j_\ast(L')$ if and only if $L$ and $L'$ differ by an element in $\pi_1(\Cnf(n))$ (c.f. \cite[p. 132]{Arkowitz:2011}), which is the trivial group. A completely analogous argument applies to the spaces $\map_\mathbf{a}(\T^n, \Cnf(n))$ and $\map(\T^n, \Cnf(n))$, and therefore we have the following bijections
\[ 
\begin{split}
\pi_0(\link(\sqcup^{n}_{i=1} S^1; \R^3)) & \cong \pi_0(\link_{\mathbf{a}}(\sqcup^{n}_{i=1} S^1; \R^3)),\\
\pi_0(\map(\T^n, \Cnf(n))) & \cong \pi_0(\map_\mathbf{a}(\T^n, \Cnf(n))).
\end{split}
\]
The above maps fit into the diagram
\begin{equation*}
\begin{tikzpicture}[baseline=(current bounding box.center),description/.style={fill=white,inner sep=2pt}]
\matrix (m) 
[
matrix of math nodes
, row sep=3em
, column sep=2.5em, 
text height=1.0ex, text depth=0.25ex
]
{ \pi_0(\link_{\mathbf{a}}(\sqcup^{n}_{i=1} S^1; \R^3))  & & \pi_0(\map_\mathbf{a}(\T^n, \Cnf(n))) \\
	\pi_0(\link(\sqcup^{n}_{i=1} S^1; \R^3)) & & \pi_0(\map(\T^n, \Cnf(n))) \\ 
};
\path[->,font=\scriptsize]

(m-1-1) edge node[auto] {$\kappa$} (m-1-3)
(m-2-1) edge node[auto] {$\kappa_{\text{free}}$} (m-2-3)

(m-1-1) edge node[auto] {$\cong$} (m-2-1)

(m-1-3) edge node[auto] {$\cong$} (m-2-3);
\end{tikzpicture}
\end{equation*}
and hence the pointed $\kappa$-invariant is injective if and only if the free $\kappa$-invariant is injective.
\bibliography{s-links}
\bibliographystyle{plain}
 
\end{document}